\theoremstyle{plain}
\newtheorem{theorem}{Theorem}[section]
\theoremstyle{definition}
\newtheorem{definition}[theorem]{Definition}
\theoremstyle{plain}
\theoremstyle{plain}
\newtheorem{corollary}[theorem]{Corollary}
\theoremstyle{plain}
\newtheorem{lemma}[theorem]{Lemma}
\theoremstyle{remark}
\newtheorem{remark}[theorem]{Remark}
\theoremstyle{remark}
\newtheorem{example}[theorem]{Example}
\theoremstyle{plain}
\newtheorem{conjecture}[theorem]{Conjecture}
\theoremstyle{plain}
\newtheorem{question}[theorem]{Question}
\title{Detected Seifert surfaces and intervals of left-orderable surgeries}
\author{Yi Wang}
\date{}
\begin{document}
	
\maketitle
	
\begin{abstract}
Motivated by the $L$-space conjecture, we prove left-orderability of certain Dehn fillings on integral homology solid tori with techniques first appearing in the work of Culler-Dunfield \cite{cullerdunfield}. First, we use the author's previous results \cite{wang} to construct arcs of representations originating at ideal points detecting Seifert surfaces inside certain 3-manifolds. This, combined with the holonomy extension locus techniques of Gao \cite{gao}, proves that Dehn fillings near 0 of such 3-manifolds are left-orderable. We then explicitly verify the hypotheses of the main theorem for an infinite collection of odd pretzel knots, establishing previously unknown intervals of orderable Dehn fillings. This verifies the $L$-space conjecture for a new infinite family of closed 3-manifolds. 
\end{abstract}

\section{Introduction}

Given a 3-manifold $M$, it is an important problem to determine whether or not $\pi_1(M)$ can be ordered; for an exposition, see Clay-Rolfsen \cite{clayrolfsen}. This study is particularly motivated by the $L$-space conjecture of Boyer-Gordon-Watson \cite{bgw}. For an experimental treatment of this conjecture, see \cite{dunfieldcensus}. We have the following results on orderability of Dehn fillings of various knots, all of which are consistent with the $L$-space conjecture:
\begin{itemize}
	\item Zung \cite{zung} showed that all surgeries of the figure-eight knot have left-orderable fundamental group.
	\item Gao \cite{gaotwobridge}, Tran \cite{tran}, and Le \cite{khanh} have also established left-orderability for certain Dehn fillings on various two-bridge knots.
	\item Nie \cite{niepretzel} and Khan-Tran \cite{khantran} studied orderability of Dehn fillings of 3-strand pretzel knots.
	\item Hu \cite{hu}, Lin-Nie \cite{linnie}, and Turner \cite{turner} studied orderability of cyclic branched covers of various knot complements.
\end{itemize}   
Many of the techniques to prove left-orderability involve representations into $\widetilde{PSL_2(\mathbb{R})}$, which is the only orderable Lie group. This approach has been systematically developed by Culler-Dunfield \cite{cullerdunfield} and Gao \cite{gao}. This paper utilizes the approach of \cite{gao}, which involves the \emph{holonomy extension locus}. 

\medskip

The general approach of \cite{cullerdunfield} and \cite{gao} is to identify smooth points on the $SL_2(\mathbb{C})$ character variety which may be conjugated to real representations, and utilize the smoothness property to deform these points into arcs of $SL_2(\mathbb{R})$-representations, which then lift to arcs of $\widetilde{PSL_2(\mathbb{R})}$-representations. These arcs may then correspond to intervals of orderable Dehn fillings. Such ``base" representations have included real Galois conjugates of discrete faithful representations into $SL_2(\mathbb{C})$, hyperbolic Dehn filling points, or points coming from roots of the Alexander polynomial \cite{cullerdunfield} \cite{gao}.

\medskip

The central idea of this paper is to deform $SL_2(\mathbb{R})$-representations from \emph{ideal points} on the character variety whose \emph{limiting characters} are known to be characters of $SL_2(\mathbb{R})$-representations; this knowledge comes from the topology of surfaces associated to these ideal points via Culler-Shalen theory \cite{cullershalen}. Our results utilize an understanding of the limiting character developed in the work of Paoluzzi-Porti \cite{paoluzziporti}, Tillmann \cite{tillmann}, and the author \cite{wang}. Ideal points are a new potential source of $SL_2(\mathbb{R})$ represenations of one-cusped 3-manifolds that can exploited to prove left-orderability of their Dehn fillings through the holonomy extension locus techniques of \cite{gao}.

\begin{definition}
We say that $M$ is \emph{half-orderable near 0} if there exists some $a \in \mathbb{R}$ such that $M(r)$ for $r \in (0, a)$ has a left-orderable fundamental group. (If $a < 0$, replace $(0, a)$ with $(a, 0)$.)
\end{definition}

The following theorem follows quickly from the techniques of \cite{gao}, and motivates the construction of $SL_2(\mathbb{R})$-representations coming from ideal points on character varieties. See Section \ref{sec:background} for a complete explanation of the terminology used in the theorem statement.

\begin{theorem}
Let $M$ be an integral homology solid torus with a genus $g$ Seifert surface $S_g$. Suppose that $S_g$ is detected by an ideal point $x$ on a component of the character variety on which the trace of the longitude is non-constant, and there is an arc of representations $\rho_t: \pi_1(M) \to SL_2(\mathbb{R})$ limiting toward $x$ such that for any lift $\widetilde{\rho_t}: \pi_1(M) \to \widetilde{PSL_2(\mathbb{R})}$, the translation number of $\widetilde{\rho_t}(\ell)$ is 0. Then $M$ is half-orderable near 0. 
\end{theorem}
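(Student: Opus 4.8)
The plan is to use the holonomy extension locus framework of Gao as a black box, so my job reduces to producing the right input data near the ideal point $x$. Recall that a representation $\rho:\pi_1(M)\to\widetilde{PSL_2(\mathbb{R})}$ gives rise to a point in the holonomy extension locus together with translation-number data for the peripheral elements $\mu$ (meridian) and $\ell$ (longitude); a filling slope $r=p/q$ is detected as left-orderable when the corresponding translation numbers satisfy the appropriate linear relation, i.e. when the point lies on the line in the $(\mathrm{trans}(\mu),\mathrm{trans}(\ell))$ plane cut out by that slope. Since $M$ is an integral homology solid torus, the slope $0$ corresponds precisely to the longitude $\ell$, so I will be tracking where the arc $\rho_t$ sits relative to the slope-$0$ line and showing it sweeps out an interval of slopes on one side of $0$.

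First I would recall from \cite{wang} (and the detection hypothesis) exactly what the limiting character at the ideal point $x$ tells us: because $S_g$ is a Seifert surface detected by $x$, the longitude $\ell$ lies in the boundary slope / fundamental-group data associated to the surface, and the limiting character is the character of a genuine $SL_2(\mathbb{R})$-representation in which $\ell$ behaves in a controlled (parabolic/elliptic-with-zero-rotation) way. The hypothesis that any lift $\widetilde{\rho_t}$ has $\mathrm{trans}(\widetilde{\rho_t}(\ell))=0$ is exactly the statement that along the whole arc the longitude contributes zero translation number, so in the $(\mathrm{trans}(\mu),\mathrm{trans}(\ell))$ picture the arc lives on the horizontal axis $\mathrm{trans}(\ell)=0$ and the only moving coordinate is $\mathrm{trans}(\widetilde{\rho_t}(\mu))$. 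Second, I would use the hypothesis that the trace of the longitude is non-constant on the component carrying $x$: as $t$ approaches the ideal point, the trace of $\mu$ (equivalently the eigenvalue data governing $\mathrm{trans}(\mu)$) is forced to vary, so $\mathrm{trans}(\widetilde{\rho_t}(\mu))$ is a genuinely non-constant continuous function of $t$ near the endpoint. Continuity plus non-constancy gives an open interval of values of $\mathrm{trans}(\widetilde{\rho_t}(\mu))$ realized along the arc.

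Third, I would translate this into orderable slopes via Gao's criterion. A Dehn filling $M(p/q)$ is orderable when there is a point of the extension locus whose peripheral translation numbers are compatible with killing $\mu^p\ell^q$ in the natural rotation-number sense; with $\mathrm{trans}(\ell)\equiv 0$ fixed, the compatible slopes are governed entirely by the value of $\mathrm{trans}(\widetilde{\rho_t}(\mu))$, and as this value ranges over an open interval containing (a limit of) the right sign, the detected slopes $r$ fill out an interval $(0,a)$ (or $(a,0)$) adjacent to $0$. Concretely, I would set up the comparison so that the slope-$0$ line corresponds to the ideal endpoint and show that the arc crosses to exactly one side of it, producing orderable fillings for all small slopes of one sign. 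The sign of $a$ is determined by the direction in which $\mathrm{trans}(\mu)$ moves, which is why the conclusion is only \emph{half}-orderability near $0$ rather than orderability on a symmetric interval.

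The main obstacle I anticipate is the passage to the limit at the ideal point: I must show that the family $\rho_t$ really does converge to $x$ in a way compatible with Gao's holonomy extension locus, so that the limiting slope of the detected interval is genuinely $0$ and not some nearby value. This requires carefully matching the Culler--Shalen ideal-point data (where traces blow up and the associated surface appears) with the coordinates on the extension locus, and verifying that the detected Seifert surface forces the longitudinal slope to be the accumulation slope. The hypotheses on $\ell$ — that $S_g$ is a Seifert surface (so $\ell$ is the longitude, slope $0$) and that $\mathrm{trans}(\widetilde{\rho_t}(\ell))=0$ — are precisely what I expect to make this limit argument go through, pinning the endpoint of the orderable interval at $0$.
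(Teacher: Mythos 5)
Your proposal runs on the wrong coordinates, and the error is fatal to the detection mechanism in your third step. You place the arc in the $(\mathrm{trans}(\mu),\mathrm{trans}(\ell))$-plane and argue that, since the trace data of the meridian varies, $\mathrm{trans}(\widetilde{\rho_t}(\mu))$ sweeps out an open interval of values, which then detects an interval of orderable slopes. But near the ideal point the trace of the meridian blows up (proof of Theorem 2.2.1 of Culler--Shalen), so $\rho_t(\mu)$ is hyperbolic, and hence so is the whole boundary holonomy; the translation number of a hyperbolic element of $\widetilde{PSL_2(\mathbb{R})}$ is an integer, so being continuous and integer-valued, $\mathrm{trans}(\widetilde{\rho_t}(\mu))$ is locally constant along the arc --- indeed, since $\mu$ generates $H^1(M)$, one can normalize the lift so that it is identically $0$. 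Thus both peripheral translation numbers are frozen at $(0,0)$, no open interval of values is swept out, and your criterion detects nothing. What you have described is essentially the translation extension locus of Culler--Dunfield, which is adapted to elliptic or parabolic boundary behavior; the regime near an ideal point is hyperbolic on the boundary, which is exactly why Gao's holonomy extension locus $H_{0,0}(M)$ is the right object. Its coordinates are not translation numbers but logarithms of eigenvalues $(\ln\lambda_\mu,\ln\lambda_\ell)$ of the boundary holonomy at a common fixed point, with the translation numbers pinned at $(0,0)$; a slope $r$ is detected when the arc meets the line through the origin of slope $-r$ (Gao's Lemma 3.8), and one must additionally discard the at most three reducible fillings before invoking that lemma.

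The paper's actual argument is short once these objects are set up correctly: the lifted arc gives an arc in $H_{0,0}(M)$ going off to infinity; Gao's Lemma 3.6 says an arc of real representations limiting to an ideal point asymptotes to the line through the origin of slope $-r$, where $r$ is the boundary slope of the detected surface --- here $r=0$ because the surface is a Seifert surface, so the asymptote is the horizontal axis; the hypothesis that the longitude trace is non-constant on the component carrying $x$ is used not to make any translation number vary, but to rule out the degenerate case that the arc lies inside the horizontal axis itself (which would force $\mathrm{tr}(\rho_t(\ell))\equiv 2$); and a non-horizontal arc asymptotic to the horizontal axis must cross every line of slope $-r$ for $r$ between $0$ and some $a$, producing the interval of orderable fillings. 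Note that the ``main obstacle'' you flag at the end --- matching the Culler--Shalen ideal-point data to the extension-locus coordinates --- is precisely Gao's Lemma 3.6, and it cannot be waved through as something the hypotheses ``make go through'': it is the one substantive input pinning the asymptote at slope $0$, and without it, and without abandoning the translation-number picture, your argument does not reach the conclusion.
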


This theorem offers a systematic approach to explaining particular arcs appearing in the holonomy extension locus of \cite{gao}: those with horizontal asymptotes at the $x$-axis. The setting of Seifert surfaces and boundary slope 0 fits well within our framework for several reasons. First, the author's previous results \cite{wang} already provide many suitable candidates for which the ideal point hypothesis is satisfied. Second, for ideal points detecting Seifert surfaces, the limiting character is well-understood enough so that the translation number hypothesis is known to be satisfied often. Thus, in this setting, the only obstruction to proving half-orderability near 0 is finding arcs of $SL_2(\mathbb{R})$-representations limiting toward ideal points. The construction of such arcs forms the technical core of the paper. For an explicit infinite family of knots for which all the hypotheses are satisfied, we prove the following corollary:

\begin{corollary}\label{thm:pretzel}
The $(-3, 3, 2n+1)$ pretzel knot complements are half-orderable near 0.
\end{corollary}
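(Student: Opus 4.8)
The plan is to verify, for each $n$, the three hypotheses of the main theorem for $M_n := S^3 \setminus \nu\bigl(P(-3,3,2n+1)\bigr)$ and then invoke it directly. First, $M_n$ is the exterior of a knot in $S^3$, so it is automatically an integral homology solid torus: $H_1(M_n) \cong \mathbb{Z}$ is generated by the meridian, and the longitude $\ell$ is nullhomologous, bounding a Seifert surface. Since all three pretzel parameters $-3$, $3$, $2n+1$ are odd, the standard pretzel surface is orientable and is a once-punctured genus-$1$ surface; I would take this as the Seifert surface $S_1$, with $g = 1$ and boundary slope $0$. I would also record that $S_1$ is incompressible, so that it is eligible to be detected at an ideal point, citing \cite{wang}.

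Second, I would produce the detecting ideal point. Here the plan is to invoke \cite{wang} to exhibit an ideal point $x$ on the $SL_2(\mathbb{C})$ character variety $X(M_n)$ whose associated essential surface, via Culler--Shalen theory \cite{cullershalen}, is $S_1$. Concretely I would fix a two-generator presentation of $\pi_1(M_n)$ adapted to the pretzel structure, parametrize the component $X_0 \subseteq X(M_n)$ carrying this ideal point in trace coordinates, and identify $x$ explicitly. I would then check that $\mathrm{tr}(\rho(\ell))$ is non-constant on $X_0$; because $S_1$ is genuinely detected (so $\ell$ is not killed uniformly) and $X_0$ contains irreducible representations, the longitude trace should vary, and I would confirm this directly from the parametrization.

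Third -- and this is the technical core -- I would construct the arc $\rho_t : \pi_1(M_n) \to SL_2(\mathbb{R})$ limiting toward $x$. Using the explicit parametrization of $X_0$ from the previous step, the plan is to restrict to the real locus of $X_0$ and exhibit a one-real-parameter family of characters of irreducible representations whose trace coordinates diverge precisely in the direction cut out by $x$, so that the family converges projectively to the ideal point. The delicate points are (i) showing that a real branch of the solution variety actually runs out to the ideal point $x$, rather than the approach to $x$ occurring only through complex or $SU(2)$ characters, and (ii) certifying that the representations along the branch take values in $SL_2(\mathbb{R})$ and not $SU(2)$. I expect this reality analysis to be the main obstacle, and I would handle it by tracking the signs of the relevant trace discriminants along the branch as the parameter tends to the ideal point.

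Finally, I would verify the translation number hypothesis and conclude. Because $\ell$ is nullhomologous, any two lifts of $\rho_t$ to $\widetilde{PSL_2(\mathbb{R})}$ agree on $\ell$, so the translation number of $\widetilde{\rho_t}(\ell)$ is a well-defined function of $t$. Using the description of the limiting character of a Seifert-surface ideal point from Paoluzzi--Porti \cite{paoluzziporti}, Tillmann \cite{tillmann}, and \cite{wang} -- in which the longitude limits to a parabolic or central element -- I would argue that $\mathrm{tr}(\rho_t(\ell))$ stays real with $|\mathrm{tr}(\rho_t(\ell))| \geq 2$ along the arc, so that $\rho_t(\ell)$ is never elliptic; the translation number is then an integer-valued continuous function of $t$, hence constant, and its value at the ideal-point limit forces it to be $0$. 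With all three hypotheses in hand, the main theorem yields that each $M_n$ is half-orderable near $0$, proving the corollary.
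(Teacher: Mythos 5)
Your proposal has the right skeleton---verify the hypotheses of Theorem \ref{thm:main} and invoke it---and your first and last steps (integral homology solid torus, genus-one pretzel Seifert surface, well-definedness of the longitudinal translation number) are fine. But the technical core, step three, contains a genuine gap: you propose to restrict to the real locus of a component $X_0 \subseteq X(M_n)$ and exhibit a real branch of characters diverging to the ideal point $x$, handling reality by ``tracking signs of trace discriminants.'' This is precisely the part that cannot be done by inspection, and the paper's entire machinery exists to avoid attempting it directly. The paper never works near the ideal point inside $X(M_n)$ at all. Instead it works in the character variety of the cut-open manifold: since the Seifert surface is \emph{free}, $M_n \setminus S \cong H$ is a genus-two handlebody with $X(H) = \mathbb{C}^3$, and the ideal point becomes a \emph{finite} point there---the limiting character $\chi_n$, which by \cite{sekino} is the character of an explicit real holonomy representation $\rho_n$ of the $(2,4)$ torus link complement (the unique Seifert-fibered JSJ piece of $M_{-3,3,2n+1}(0)$). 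The arc is built by deforming $\chi_n$ along the real points of the explicit gluing curve $C_n \subset \mathbb{C}^3$ (Lemma \ref{lma:limiting}), not by finding a real branch of $X(M_n)$ running to infinity. Your proposal has no counterpart for the base representation $\rho_n$, no use of the JSJ structure of the $0$-surgery, and hence no starting point from which reality can be propagated.

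The second missing idea is the HNN-extension obstruction, which your sign-tracking remark gestures at but does not resolve. A deformed representation of $\pi_1(H) = F_2$ only yields a representation of $\pi_1(M_n)$ if its restrictions to the two punctured-torus subgroups $\langle m_1,\ell_1\rangle$ and $\langle m_2,\ell_2\rangle$ are conjugate \emph{inside} $SL_2(\mathbb{R})$; having equal characters only guarantees conjugacy by a $GL_2(\mathbb{R})$ matrix of determinant $\pm 1$, and in the determinant $-1$ case the HNN extension does not go through over $SL_2(\mathbb{R})$. The paper's Lemma \ref{lma:offdiag} (the invariant $\Delta(A) = b - c$ for parabolic and elliptic matrices) is exactly the tool that certifies the conjugator has determinant $+1$, and verifying $\Delta(\rho_n(m_1))$ and $\Delta(\rho_n(m_2))$ have the same sign is item (1) of Lemma \ref{lma:limiting}; Goldman's theorem \cite{goldman} then handles lifting nearby real characters to $SL_2(\mathbb{R})$ (your $SU(2)$ worry), since the image of $\rho_n$ contains a hyperbolic element. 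Finally, your translation-number argument asserts $|\mathrm{tr}(\rho_t(\ell))| \geq 2$ along the arc without justification; the paper instead gets $\mathrm{trans}(\widetilde{\rho_t}(\ell)) = 0$ from $\rho_n([m_1,\ell_1]) = I$ at the base point (Claim 8.5 of \cite{cullerdunfield}) together with the observation that near the ideal point the meridian trace blows up, forcing the commuting longitude to be hyperbolic, so the translation number is a continuous integer-valued function equal to $0$ at $t=0$. Without the cut-open-manifold construction and the $\Delta$-sign lemma, your outline cannot be completed as written.
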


This is a new explicit addition to list of left-orderable 3-manifolds. The $(2p+1, 2q+1, 2r+1)$ pretzel knots with $p, q, r > 0$ were already known to be orderable in a neighborhood of 0 from the work of Khan-Tran \cite{khantran}, which proved using elliptic representations that such knots are orderable in the interval $(-\infty, 1)$. In contrast, the results in this paper utilize arcs of boundary-hyperbolic representations into $SL_2(\mathbb{R})$ ending at ideal points detecting Seifert surfaces, establishing the presence of certain arcs in the holonomy extension locus. The process through which Corollary \ref{thm:pretzel} is proven demonstrates a general methodology for proving similar results for other knot complements with detected Seifert surfaces. 

\medskip

Corollary \ref{thm:pretzel} also proves the $L$-space conjecture for Dehn fillings near 0 of these pretzel knots, after combining with the results of Li-Roberts \cite{liroberts} and Hedden \cite{hedden}. 

\subsection{Outline of the paper}

Section \ref{sec:background} goes through necessary background to understand the arguments of the paper, including character varieties, Culler-Shalen theory, left-orderability, and related techniques. Section \ref{sec:proof} contains the main results of the paper. Section \ref{sec:examples} explicitly verifies the hypotheses of the main theorem for a collection of odd pretzel knots. Section \ref{sec:future} lists some potential future directions of research that would improve the scope and applicability of Theorem \ref{thm:main}. 

\subsection{Acknowledgements}

The author thanks Nathan Dunfield for numerous helpful conversations that guided him through this work. 

\section{Background}\label{sec:background}

For relevant background on 3-manifolds, JSJ decompositions, and Seifert fiberings, see \cite{thurston}. For more background on Culler-Shalen theory and limiting characters, see the background section of \cite{wang}, and for background on left-orderability and the holonomy extension locus, see \cite{gao}.

\subsection{Ideal points and incompressible surfaces}

Let $M$ be a compact 3-manifold. 

\begin{definition}
The \emph{$SL_2(\mathbb{C})$ character variety} of $M$, denoted $X(M)$, is defined as 
\begin{equation}
	X(M) = \{\text{tr}(\rho) \mid \rho: \pi_1(M) \to SL_2(\mathbb{C})\}
\end{equation}
i.e. the set of traces of representations to $SL_2(\mathbb{C})$.
\end{definition}

One can see that the $SL_2(\mathbb{C})$ is an affine algebraic set, often with multiple algebraically irreducible components. See \cite{cullershalen} for a proof. 

\begin{remark}
Note that this is \emph{not} the same as the moduli space of conjugacy classes of $SL_2(\mathbb{C})$ representations. In particular, these moduli spaces can exhibit non-Hausdorff topology near conjugacy classes of reducible representations. 
\end{remark}

\begin{definition}
Let $M$ be a hyperbolic 3-manifold. The \emph{holonomy representation} $\rho_0: \pi_1(M) \to SL_2(\mathbb{C})$ is the unique discrete faithful representation corresponding to the hyperbolic structure of $M$. If $M$ is Seifert-fibred over a hyperbolic 2-orbifold, a \emph{holonomy representation} is equal to $\pm I$ on the regular fiber, and equal to the discrete faithful representation on the fundamental group of the base orbifold. A \emph{canonical component} is an algebraically irreducible component which contains the trace of a Galois conjugate of a holonomy representation. 
\end{definition}

A foundational result of Thurston \cite{thurston} is that the complex dimension of the canonical component is equal to the number of cusps in a hyperbolic 3-manifold. One particular focus of this paper will be \emph{ideal points}, which can informally be thought of as ``points at infinity" on affine algebraic curves. 

\begin{definition}
Let $C$ be an affine algebraic curve, and let $\widetilde{C}$ be a projectivization. An \emph{ideal point} of $C$ is an element $x \in \widetilde{C} \setminus C$. 
\end{definition}

In the original paper introducing $SL_2(\mathbb{C})$-character varieties in the context of 3-manifolds, Culler and Shalen established a method which associates ideal points on a curve in $X(M)$ to incompressible surfaces in 3-manifolds. The theory is developed in full detail in \cite{cullershalen} and Chapter 19 of \cite{handbook}, while brief outlines can be found in \cite{paoluzziporti} and \cite{wang}.

\begin{definition}
Let $M$ be a hyperbolic 3-manifold, and let $X(M)$ be the $SL_2(\mathbb{C})$ character variety. If an incompressible surface $S \subset M$ is associated to an ideal point $x$ of $X(M)$, we say that $S$ is \emph{detected} by $x$. 
\end{definition}

Determining which incompressible surfaces in a given 3-manifold $M$ are detected by ideal points in $X(M)$ is a subtle problem, which has been studied in \cite{Cooper1994PlaneCA}, \cite{paoluzziporti}, \cite{Chesebro2005NotAB}, for instance. Some developments in Culler-Shalen theory, initiated by Dunfield \cite{Dunfield2008AP1} and Tillmann \cite{tillmann} have centered on the \emph{limiting character} at $x$. See \cite{wang} for a detailed exposition of the limiting character and its relevance to Culler-Shalen theory.

\begin{definition}
Let $x$ be an ideal point in $X(M)$ with detected surface $S \subset M$. Denote $M_i$ to be the connected components of $M \setminus S$. Let $\{\chi_j\} \subset X(M)$ be a sequence approaching $x$. The \emph{limiting character} $\chi_{\infty, i}: \pi_1(M) \to \mathbb{C}$ is given by 
\begin{equation}
	\chi_{\infty, i} = \lim_{\chi_j \to x}\chi_j|_{\pi_1(M_i)}
\end{equation}
It is shown in \cite{cullershalen} that $\chi_{\infty, i}$ takes on finite values on $\pi_1(M_i)$, and that $\chi_{\infty, i}$ is the trace of a well-defined representation $\rho_{\infty, i}: \pi_1(M_i) \to SL_2(\mathbb{C})$.
\end{definition}

In \cite{wang} and \cite{twicepunctured} the author initiated the study of limiting characters of punctured tori in hyperbolic knot complements. Limiting characters at ideal points detecting Conway spheres were also discussed in \cite{paoluzziporti}. We now state the relevant results on the limiting character that will be used for this paper. 

%
%

The arguments of \cite{wang} can be adapted to prove the following, slightly modified theorem. 

\begin{theorem}[\cite{wang}]\label{thm:genusone}
Let $M$ be a one-cusped hyperbolic 3-manifold, and let $\mathcal{T} = \{T_i\}_{i=1}^n \subset M$ be a system of disjoint non-parallel, non-fibered genus one Seifert surfaces that cap off to a JSJ decomposition of $M(0)$. Let $r: X(M) \to X(M \setminus \mathcal{T})$ be the restriction map, and let $V \subset X(M \setminus \mathcal{T})$ be the Zariski-closure of the image of $r$. Suppose that the holonomy traces of the JSJ complementary regions of $M(0)$ are a point $\chi \in V$ such that the trace of the longitude is non-constant near $\chi$. Then $\mathcal{T}$ is detected by an ideal point $x$ on an irreducible component of $X(M)$ for which the trace of the longitude is nonconstant. In addition, the limiting character at $x$ is the trace of the holonomy representation of the JSJ complementary regions of $M(0)$. 
\end{theorem}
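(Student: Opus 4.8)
The plan is to exhibit $\chi$ as a \emph{non-attained} boundary value of the restriction map $r$ and to recognize the resulting point at infinity as an ideal point detecting $\mathcal{T}$. First I would match up the combinatorics: cutting $M$ along the once-punctured tori $T_i$ and capping along the slope-$0$ filling solid torus changes neither the fundamental groups of the complementary regions nor the peripheral status of the longitude $\ell=\partial T_i$, so the components of $M\setminus\mathcal{T}$ are identified with the JSJ pieces $\{N_j\}$ of $M(0)$ and the assembled holonomy $\chi=(\chi_j)_j$ is a genuine point of $X(M\setminus\mathcal{T})$, lying in $V=\overline{r(X(M))}$ by hypothesis. The crucial observation is that $\chi$ is in general \emph{not} in the image of $r$: the complete structures on the pieces $N_j$ restrict to reducible (parabolic or peripheral) representations on each gluing torus $\widehat{T}_i$, and the two adjacent restrictions need not be compatible under the JSJ gluing, so $\chi$ does not extend to a character of $\pi_1(M)$. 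Thus $\chi\in V\setminus r(X(M))$ is exactly the kind of value that can only be realized in a limit.

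Next I would manufacture a curve limiting to $\chi$. Since $\operatorname{tr}_\ell$ is non-constant near $\chi$ on $V$, there is an affine curve $D\subset V$ through $\chi$ along which $\operatorname{tr}_\ell$ varies; because $V$ is the Zariski closure of $r(X(M))$, the map $r$ is dominant onto the component of $V$ carrying $D$, and I would choose an irreducible curve $C$ in a component $X_0\subset X(M)$ with $\overline{r(C)}=D$. By construction $\operatorname{tr}_\ell$ is non-constant on $X_0$, which already gives the last clause of the conclusion. Passing to smooth projective models and the induced morphism $\bar r:\overline{C}\to\overline{D}$, the finite point $\chi\in D$ has a preimage $x\in\overline{C}$; since $\chi\notin r(X(M))$, this preimage $x$ is an ideal point of $C$. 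Along $C$ approaching $x$ the restricted characters converge to $\chi$, so every trace function of an element of a piece subgroup $\pi_1(N_j)$ stays bounded at $x$ with limiting value prescribed by $\chi$.

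It remains to run the Culler--Shalen machinery at $x$ and identify the detected surface. The valuation at $x$ produces a nontrivial $\pi_1(M)$-action on a tree with detected essential surface $S_x$; boundedness of the piece traces forces each $\pi_1(N_j)$ to act with elliptic elements and, by the standard structure of the Culler--Shalen splitting, to fix a vertex, so $S_x$ is disjoint from the interiors of the $N_j$ and is therefore carried by the cutting system $\mathcal{T}$. Because the $\widehat{T}_i$ constitute the canonical (minimal) JSJ decomposition of $M(0)$, the only essential surface separating precisely these complementary pieces is $\mathcal{T}$ itself, giving $S_x\simeq\mathcal{T}$; here the non-fibered hypothesis is what guarantees that the pieces are the asserted geometric ones and that the splitting cannot collapse onto a coarser one. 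The longitude stays bounded at $x$ (boundary slope $0$), while the meridian $\mu$, which meets each $T_i$ once, acts as a hyperbolic isometry and has $\operatorname{tr}_\mu\to\infty$, confirming that $x$ is a genuine ideal point. Finally, unwinding the definition of the limiting character as $\lim_{\chi_j\to x}\chi_j|_{\pi_1(N_j)}$ identifies it with $\chi$, the holonomy of the JSJ complementary regions.

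The step I expect to be the main obstacle is establishing that $\chi$ is a non-attained boundary value whose associated ideal point detects exactly $\mathcal{T}$: one must control the gluing behaviour of the pieces' holonomies well enough to know that $\chi\notin r(X(M))$ yet $\chi\in V$, and simultaneously rule out that the degeneration at $x$ is transverse to a coarser or finer surface, or that the pieces themselves degenerate. This is precisely where the hypotheses $\chi\in V$ and non-constancy of $\operatorname{tr}_\ell$ near $\chi$, together with the rigidity supplied by the JSJ and geometric structure of $M(0)$ and the techniques of \cite{wang}, carry the argument; matching this algebraic boundary behaviour to the topological conclusion $S_x\simeq\mathcal{T}$ is the technical heart.
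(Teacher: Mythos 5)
Your overall architecture matches the paper's: cut along $\mathcal{T}$, consider the restriction map $r$ and the Zariski closure $V$ of its image, and place the holonomy character $\chi$ in $V \setminus \mathrm{im}(r)$ so that it can only be reached as a limit. Where you diverge is in the finishing step: the paper (following \cite{wang}) notes that $\chi$ is an \emph{isolated} point of $V \setminus \mathrm{im}(r)$ and then quotes Tillmann's characterization of ideal points in terms of limiting characters, whereas you re-derive detection by hand --- choose a curve $D \subset V$ through $\chi$ on which $\mathrm{tr}_\ell$ varies, pull it back to a curve $C \subset X(M)$, observe that any preimage of $\chi$ in the smooth projective model of $C$ must be an ideal point, and run the Culler--Shalen tree argument (bounded traces on each piece force each $\pi_1(N_j)$ into a vertex stabilizer, so the dual essential surface can be isotoped into a product neighborhood of $\mathcal{T}$ and hence consists of parallel copies of its components). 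That portion is a legitimate, essentially self-contained alternative to the Tillmann citation, and your identification of the limiting character with $\chi$ via continuity of $\bar{r}$ is clean.

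The genuine gap is the step you yourself flag as the obstacle: the claim $\chi \notin r(X(M))$. Your justification is that the pieces' holonomies restricted to the gluing tori ``need not be compatible'' under the JSJ gluing --- but ``need not'' is not ``cannot,'' and this claim is precisely the mathematical content of the theorem: everything downstream (the preimage $x$ being an ideal point rather than an ordinary point of $C$, hence the existence of any detected surface at all) depends on it. Concretely, since the holonomy $\rho^*$ of the pieces pulls back to an \emph{irreducible} representation of $\pi_1(M \setminus \mathcal{T})$, having $\chi \in \mathrm{im}(r)$ is equivalent (by the standard fact that an irreducible character determines the representation up to conjugacy) to $\rho^*$ extending over the HNN extension $\pi_1(M)$, i.e.\ to the existence of $T \in SL_2(\mathbb{C})$ conjugating $\rho^*|_{\langle m_1, \ell_1 \rangle}$ to $\rho^*|_{\langle m_2, \ell_2 \rangle}$ compatibly with the gluing map. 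Ruling out such a $T$ is where the non-fibered hypothesis, the fact that $\rho^*$ kills the boundary curve of the Seifert surface, and the rigidity of the holonomies of the JSJ pieces must actually be used; this is the paper's third bullet (``use the topology of the JSJ decomposition \dots\ an isolated point in $C \setminus \mathrm{im}(r)$''), proved in \cite{wang}, and your proposal leaves it as an acknowledged but unfilled hole rather than an argument. A secondary inaccuracy feeding into this: capping off along the slope-$0$ solid torus does change the fundamental groups of the complementary regions --- attaching the $2$-handle kills the longitude $c = [m_1, \ell_1]$, a nontrivial element of $\pi_1(M \setminus \mathcal{T})$, so $\chi$ is the pullback of the pieces' holonomy character along a proper quotient; indeed it is exactly the tension between $\rho^*(c) = I$ at $\chi$ and $\mathrm{tr}_{[m_1,\ell_1]}$ being nonconstant on $V$ near $\chi$ that the correct argument exploits.
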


This is essentially a rephrasing of the theorem with extra hypotheses to allow for broader scope; the proof is exactly the same as the one presented in \cite{wang}. We now sketch the ideas in this proof. 

For the purposes of this paper, we focus on the case where the Seifert surface is connected. 
\begin{itemize}
	\item Cut $M$ along the genus one Seifert surface $S$ to get a sutured 3-manifold $H = M \setminus S$ with genus 2 boundary $\partial H = P^+ \cup c \cup P^-$, where $P^\pm$ are once-punctured tori that are copies of $S$. Define the restriction map $r: X(M) \to X(H)$.
	\item Notice that $H$ with a 2-handle glued to $c$ is homeomorphic to the JSJ component of $M(0)$, equipped with a holonomy representation $\rho^*$.
	\item Let $C$ be the Zariski-closure of the image of $r$. Use the topology of the JSJ decomposition to determine that the character of $\rho^*$ is a point in $C$, and it is an isolated point in $C \setminus \text{im}(r)$. 
	\item Use Tillmann's result \cite{tillmann} characterizing ideal points in terms of limiting characters to conclude that $r^{-1}(\chi(\rho^*))$ is actually an ideal point of $X(M)$ which detects $S$, with limiting character $\chi(\rho*)$. 
\end{itemize}
The central idea of this paper is to take cases where the characters of $\rho^*$ have entirely real characters, deform to an analytic arc of real points in the image of $r$, and pull back to an arc of $SL_2(\mathbb{R})$-representations in $X(M)$. The above proof then relates this to an ideal point with real limiting character which detects a Seifert surface. This will have practical implications for proving certain Dehn fillings of $M$ are \emph{left-orderable}. 

\subsection{Left-orderability}

\begin{definition}
Let $G$ be a finitely generated group. We say $G$ is \emph{left-orderable} if it admits an ordering $<$ such that for all $f, g, h \in G$, $g < h$ implies $fg < fh$. If $M$ is a 3-manifold, we say that $M$ is \emph{left-orderable} if $\pi_1(M)$ is a left-orderable group.
\end{definition}

The interest in left-orderability in the context of low-dimensional topology comes from the \emph{$L$-space conjecture} from \cite{bgw}:

\begin{conjecture}
Let $M$ be a 3-manifold whose homology groups are isomorphic to those of $S^3$. Then the following are equivalent:
\begin{enumerate}
	\item $M$ is not an $L$-space.
	\item $M$ admits a taut foliation.
	\item $\pi_1(M)$ is left-orderable.
\end{enumerate}
\end{conjecture}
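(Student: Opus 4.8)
The final statement is the $L$-space conjecture of Boyer-Gordon-Watson, which remains open in full generality; I will therefore propose a program rather than a complete argument, isolating the one implication that is already a theorem from the two that carry the genuine difficulty. First I would fix the homological setting: since $H_*(M) \cong H_*(S^3)$, the manifold $M$ is an integer homology sphere, so there are no $b_1$-subtleties and ``$L$-space'' simply means $\widehat{HF}(M)$ has rank $1$ (equivalently $HF_{\mathrm{red}}(M) = 0$). The plan is to establish a directed cycle $(2) \Rightarrow (1) \Rightarrow (3) \Rightarrow (2)$ among the three conditions, which suffices for the full equivalence. The edge $(2) \Rightarrow (1)$ is available off the shelf: by Eliashberg-Thurston a taut foliation on a closed oriented $3$-manifold perturbs to a weakly symplectically fillable contact structure, and by Ozsv\'ath-Szab\'o such a filling forces $HF_{\mathrm{red}}(M) \neq 0$; equivalently, an $L$-space admits no taut foliation. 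I would cite this as known and concentrate all effort on the remaining two edges.

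The edge $(1) \Rightarrow (3)$, ``not an $L$-space implies left-orderable,'' is the analytic-to-algebraic bridge and the first true obstacle. The strategy is to manufacture, from the nonvanishing of $HF_{\mathrm{red}}(M)$, a nontrivial action of $\pi_1(M)$ on $\mathbb{R}$ by orientation-preserving homeomorphisms (equivalently, a left-order). The present paper's technique is exactly the kind of construction one wants: it produces arcs of $\widetilde{PSL_2(\mathbb{R})}$-representations and reads off orderability from translation numbers. Globalizing such a construction from the Dehn-filling and ideal-point setting to \emph{all} non-$L$-spaces, however, requires a general mechanism converting Floer-theoretic data into representations or line actions, and no such functorial passage is known. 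This is where I expect the program to stall.

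The edge $(3) \Rightarrow (2)$, ``left-orderable implies taut foliation,'' is the dynamical-to-geometric bridge. A left-order yields a faithful action of $\pi_1(M)$ on the line; the aim is to promote this to a $\pi_1$-invariant structure on the universal cover $\widetilde{M} \cong \mathbb{R}^3$ --- a universal circle or order tree in the sense of Thurston and Calegari-Dunfield --- and then integrate it to a codimension-one taut foliation. The difficulty is that an abstract order supplies only a one-dimensional action with no a priori geometric compatibility with $M$; passing from it to an honest foliation, rather than merely an essential lamination or an action on an order tree, is precisely the open problem, solved so far only in restricted classes such as graph manifolds (Boyer-Clay).

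I would close by observing that the entire cycle is known unconditionally for graph manifolds, so the program is already complete there, and that the obstruction to the general case is structural: there are no known natural passages between the three worlds of Heegaard Floer homology, orderings of $\pi_1$, and taut foliations. The main new input one would need is a construction bridging $(1)$ and $(3)$ that does not route through an explicit surgery or Dehn-filling description --- which is exactly the gap the representation-theoretic methods of this paper begin to probe in special cases, and the honest assessment is that closing it in general is beyond current technology.
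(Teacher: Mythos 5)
This statement is the $L$-space conjecture of Boyer--Gordon--Watson itself, which the paper states as an open conjecture (citing \cite{bgw}) and never proves, so there is no proof to compare against; your proposal is right to decline a complete argument, and your accounting is accurate: only $(2)\Rightarrow(1)$ is known in general (Eliashberg--Thurston perturbation to a weakly fillable contact structure plus the Ozsv\'ath--Szab\'o obstruction $HF_{\mathrm{red}}(M)\neq 0$), the full equivalence is a theorem for graph manifolds by Boyer--Clay, and the bridges $(1)\Rightarrow(3)$ and $(3)\Rightarrow(2)$ remain open, exactly as you say. One caveat worth recording: as stated here the equivalence requires an irreducibility hypothesis (present in \cite{bgw} but omitted in the paper), since for instance the connected sum of the Poincar\'e sphere with any integer homology sphere carrying a taut foliation is not an $L$-space, yet its fundamental group contains torsion (hence is not left-orderable) and reducible manifolds other than $S^1 \times S^2$ admit no taut foliations, so $(1)$ holds while $(2)$ and $(3)$ fail.
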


In general, left-orderability has been the most difficult of the three conditions to study. Here, we recall some of the existing literature on left-orderability of closed 3-manifold groups, with a particular emphasis on the techniques involving the translation extension locus and the holonomy extension locus. In \cite{cullerdunfield}, a systematic way to study left-orderability of 3-manifolds using character varieties was developed. It was shown in \cite{wiest} that if $H$ is a left-orderable group, any 3-manifold group $G$ with a homomorphism $G \to H$ is also orderable. The overarching strategy revolves around $\widetilde{PSL_2(\mathbb{R})}$, the universal cover of $SL_2(\mathbb{R})$ and $PSL_2(\mathbb{R})$. 

\begin{lemma}\cite{ghys}
$\widetilde{PSL_2(\mathbb{R})}$ is left-orderable.
\end{lemma}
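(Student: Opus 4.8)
The plan is to realize $\widetilde{PSL_2(\mathbb{R})}$ as a subgroup of $\mathrm{Homeo}^+(\mathbb{R})$, the group of orientation-preserving homeomorphisms of the real line, and then to exhibit an explicit left-order on $\mathrm{Homeo}^+(\mathbb{R})$. This reduces the lemma to two ingredients: (i) the elementary fact that any subgroup of a left-orderable group is left-orderable (one simply restricts the positive cone, and checks $P \cap H$ is again a positive cone for $H$), and (ii) the construction of a left-order on $\mathrm{Homeo}^+(\mathbb{R})$ itself. Ingredient (i) is immediate, so the real work lies in producing the embedding and in (ii).

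For the embedding, recall that $PSL_2(\mathbb{R})$ acts faithfully and orientation-preservingly on the circle at infinity $\partial \mathbb{H}^2 \cong S^1 = \mathbb{R}/\mathbb{Z}$ by M\"obius transformations. Passing to universal covers, every element of $\widetilde{PSL_2(\mathbb{R})}$ lifts to a homeomorphism $\tilde f : \mathbb{R} \to \mathbb{R}$ commuting with the unit translation $x \mapsto x+1$. The central kernel $\mathbb{Z} = \ker(\widetilde{PSL_2(\mathbb{R})} \to PSL_2(\mathbb{R}))$ acts on $\mathbb{R}$ by the nontrivial deck translations, so the lifted action is faithful, giving an injection $\widetilde{PSL_2(\mathbb{R})} \hookrightarrow \mathrm{Homeo}^+(\mathbb{R})$. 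This is also precisely the dynamical picture underlying the translation number that appears elsewhere in the paper.

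For the order on $\mathrm{Homeo}^+(\mathbb{R})$, I would fix a countable dense sequence $x_1, x_2, \ldots$ in $\mathbb{R}$ (for instance an enumeration of $\mathbb{Q}$). For $g \neq \mathrm{id}$, let $n(g)$ be the least index with $g(x_{n(g)}) \neq x_{n(g)}$; this is well defined because density together with continuity forces $g$ to move some $x_n$ whenever $g \neq \mathrm{id}$. Declare $g$ to be positive when $g(x_{n(g)}) > x_{n(g)}$, let $P$ be the set of positive elements, and define $a \prec b$ to mean $a^{-1}b \in P$. Such an order is automatically left-invariant, so it remains only to verify that $P$ is a positive cone: that for each $g \neq \mathrm{id}$ exactly one of $g, g^{-1}$ lies in $P$ (here one uses that $g$ is order-preserving, so $g(x_n) > x_n$ forces $g^{-1}(x_n) < x_n$ once $g$ fixes $x_1, \ldots, x_{n-1}$), and that $P$ is closed under multiplication.

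The main obstacle is this last point, the multiplicative closure of $P$, which is the one genuinely case-based step. I would set $a = n(g)$, $b = n(h)$ and $j = \min(a,b)$, observe that $gh$ fixes $x_1, \ldots, x_{j-1}$, and then show $gh(x_j) > x_j$ in each of the cases $a < b$, $a > b$, and $a = b$. The mechanism in every case is that the maps are order-preserving: for example, when $b < a$ the map $g$ fixes $x_j = x_b$, so $h(x_j) > x_j$ yields $g(h(x_j)) > g(x_j) = x_j$. In all cases $n(gh) = j$ with $gh$ positive, so $P \cdot P \subseteq P$. This completes the verification that $\prec$ is a left-invariant total order on $\mathrm{Homeo}^+(\mathbb{R})$, and hence, by restriction, on $\widetilde{PSL_2(\mathbb{R})}$.
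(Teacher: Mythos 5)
Your proof is correct and complete: the embedding of $\widetilde{PSL_2(\mathbb{R})}$ into $\mathrm{Homeo}^+(\mathbb{R})$ via lifts of the boundary-circle action is faithful for exactly the reason you give, and your dense-sequence positive cone on $\mathrm{Homeo}^+(\mathbb{R})$, including the case analysis for multiplicative closure, checks out. The paper itself offers no proof of this lemma --- it is quoted from Ghys --- and your argument is precisely the standard one found in that reference, so there is nothing to compare beyond noting that you have supplied the details the paper delegates to the citation.
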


The strategy developed by \cite{cullerdunfield} utilizes $SL_2(\mathbb{C})$ character varieties of one-cusped hyperbolic 3-manifolds to find homomorphisms from fundamental groups of Dehn fillings to $\widetilde{PSL_2(\mathbb{R})}$, showing that these Dehn fillings are also left-orderable. From $SL_2(\mathbb{C})$ character varieties, real points may correspond to $SL_2(\mathbb{R})$ representations. Given such a representation $\rho: \pi_1(M) \to SL_2(\mathbb{R})$, the \emph{Euler class} $e(\rho) \in H^2(\pi_1(M); \mathbb{Z})$ serves as an obstruction from lifting this representation to $\widetilde{PSL_2(\mathbb{R})}$. See \cite{ghys} for the definition, and \cite{cullerdunfield} for its role as an obstruction to lifting representations. For the purposes of this paper, we will restrict our attention to the case where $M$ is an integral homology solid torus, meaning that $H^2(\pi_1(M); \mathbb{Z})$ is trivial, and all $SL_2(\mathbb{R})$ representations will lift. 

\begin{definition}
The \emph{translation number} of a group element $g \in \widetilde{PSL_2(\mathbb{R})}$ is defined as 
\begin{equation}
	\text{trans}(g) = \lim_{n \to \infty}\frac{g^n(x) - x}{n} \ \ \ \ \ x \in \mathbb{R}
\end{equation}
This is independent of the choice of real number $x$.
\end{definition}

\begin{definition}
A matrix $A \in SL_2(\mathbb{R})$ is:
\begin{enumerate}
	\item \emph{elliptic} if $|\text{tr}(A)| < 2$
	\item \emph{parabolic} if $|\text{tr}(A)| = 2$
	\item \emph{hyperbolic} if $|\text{tr}(A)| > 2$
\end{enumerate}
An element $g \in \widetilde{PSL_2(\mathbb{R})}$ is \emph{elliptic, parabolic, or hyperbolic} if it descends to a matrix with the corresponding adjective in $SL_2(\mathbb{R})$.
\end{definition}

In order to capture the relevant information of $\widetilde{PSL_2(\mathbb{R})}$ representations, \cite{cullerdunfield} defined the \emph{translation extension locus} which captures $\widetilde{PSL_2(\mathbb{R})}$ representations which are elliptic or parabolic on the boundary, and \cite{gao} defined the \emph{holonomy extension locus}, which captures $\widetilde{PSL_2(\mathbb{R})}$ representations which are hyperbolic or parabolic on the boundary. Roughly, these capture the representation data when restricted to the boundary, making them ripe for Dehn filling considerations. In this paper, we will focus on the holonomy extension locus. 

\medskip

Suppose we are given a representation $\widetilde{\rho}: \pi_(M) \to \widetilde{PSL_2(\mathbb{R})}$ be a lift of $\rho: \pi_1(M) \to SL_2(\mathbb{R})$ such that $\rho|_{\pi_1(\partial M)}$ is either hyperbolic or parabolic. Let $\pi_1(\partial M) = \langle m, \ell \rangle$, and suppose further that $\text{trans}(\widetilde{\rho}(m)) = \text{trans}(\widetilde{\rho}(\ell)) = 0$. Then $\rho|_{\pi_1(\partial M)}$ has at least one fixed vector $v \in \mathbb{C}P^1 = \mathbb{H}^2$. Now let $\lambda_m, \lambda_\ell$ be the eigenvalues of $\rho(m)$ and $\rho(\ell)$ with respect to $v$, and take the point $(\ln(\lambda_m), \ln(\lambda_\ell)) \in \mathbb{R}^2$. The \emph{holonomy extension locus}, denote $H_{0, 0}(M)$, is the image of this map in $\mathbb{R}^2$. It is shown in \cite{gao} that $H_{0, 0}(M)$ is a locally finite union of arcs in $\mathbb{R}^2$. Crucially, Lemma 3.6 in \cite{gao} shows that real representations tending toward ideal points, when projected into the holonomy extension locus, tend toward a line through the origin with slope $-r$ in $\mathbb{R}^2$, where $r$ is the boundary slope of the surface detected by the ideal point. 

\medskip

Previous results, particularly those in \cite{gao}, identified smooth points on the character variety that have real coordinates, and then deformed these smooth points to arcs of $SL_2(\mathbb{R})$-representations. This paper will focus on deforming ideal points with real limiting character to arcs of $SL_2(\mathbb{R})$ representations, which will correspond to particular arcs on the holonomy extension locus, thus leading to intervals of orderable surgeries.

\subsection{Motivating examples}

The goal of this paper is to explain particular arcs in the holonomy extension locus which result in Dehn fillings which are half-orderable near 0. 

\begin{example}[$7_3$ knot]
The $7_3$ knot is a non-fibered genus 2 knot. Its holonomy extension locus was shown in Figure 4 in Section 4 of \cite{gao}. Indeed, $H_{0, 0}(7_3)$ contained an arc with asymptotes at slope 0 and 6, and the accompanying caption notes that no existing theorem predicts the existence of this arc. However, this arc is exactly the type of arc in the holonomy extension locus predicted by the results in this paper. SnapPy and Sage show that $7_3$ is longitudinally rigid, so any ideal point detecting the Seifert surface will lie on a component on which the trace of the longitude is non-constant. In order to fully verify that this arc can be explained by the results in this paper, it remains to show that the Seifert surface is detected by an ideal point with a real limiting character which lifts the longitude to translation number 0, and to construct an arc of $SL_2(\mathbb{R})$-representations coming from this ideal point. 
\end{example}

\begin{example}[$8_6$ knot]
The $8_6$ knot is another non-fibered genus 2 knot. In Section 8.2 of \cite{gaotwobridge}, the holonomy extension locus $H_{0, 0}(8_6)$ is shown. There is another arc in the holonomy extension locus with a horizontal asymptote with slope 0. The techniques in this paper should also predict the existence of this arc. Just as in the previous example, SnapPy and Sage show that this knot is longitudinally rigid, and this example should behave similarly to the $7_3$ knot.
\end{example}

In fact, for every known example where there is no arc in the holonomy extension locus with asymptotic slope 0, the knot is fibered. The heuristic to which this paper begins to build is the following: ``Given a nonfibered integral homology solid torus, if the limiting character at an ideal point detecting a Seifert surface is conjugate to an $SL_2(\mathbb{R})$-representation, and the trace of the homological longitude is nonconstant near that ideal point, we can reasonably expect that there is an arc in the holonomy extension locus with asymptote 0, leading to the manifold being half-orderable near 0."

\section{Main results}\label{sec:proof}

\begin{theorem}\label{thm:main}
Let $M$ be an integral homology solid torus with a nonfiber genus $g$ Seifert surface $S_g$. Suppose that $S_g$ is detected by an ideal point $x$ on a component of the character variety on which the trace of the longitude $\ell$ is non-constant, and there is an arc of representations $\rho_t: \pi_1(M) \to SL_2(\mathbb{R})$ whose characters limit toward $x$ such that for any lift $\widetilde{\rho_t}: \pi_1(M) \to \widetilde{PSL_2(\mathbb{R})}$, the translation number of $\widetilde{\rho_t}(\ell)$ is 0. Then $H_{0, 0}(M)$ admits a non-horizontal arc which has a horizontal asymptote at the $x$-axis, and $M$ is half-orderable near 0.
\end{theorem}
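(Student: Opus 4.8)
The plan is to verify that the given arc $\rho_t$ descends into $H_{0,0}(M)$, read off its asymptotic behavior from the boundary slope of $S_g$, and then produce representations of the Dehn fillings $M(r)$ by intersecting the resulting arc with the lines in $\mathbb{R}^2$ that encode the filling slopes.

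First I would set up the boundary data. Since $S_g$ is a Seifert surface, $\partial S_g = \ell$ and its boundary slope is $0$. Near the ideal point $x$ detecting $S_g$, the function $\text{tr}(\rho_t(\ell))$ converges to the finite limiting-character value $\chi_{\infty}(\ell)$, while $\text{tr}(\rho_t(m))$ blows up; hence $\rho_t(m)$ is hyperbolic for $t$ close to the ideal point. Because $m$ and $\ell$ commute, $\rho_t(\ell)$ shares the axis of $\rho_t(m)$ and is therefore hyperbolic or parabolic, so $\rho_t|_{\pi_1(\partial M)}$ is boundary-hyperbolic and each boundary element has a well-defined real log-eigenvalue. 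To place the arc in $H_{0,0}(M)$ I must arrange both translation numbers to vanish. Since $M$ is an integral homology solid torus, $H^1(M;\mathbb{Z}) \cong \mathbb{Z}$ is generated by the dual of $m$, so distinct lifts $\widetilde{\rho_t}$ differ only by powers of the central generator weighted by this class; this shifts $\text{trans}(\widetilde{\rho_t}(m))$ by integers but leaves $\text{trans}(\widetilde{\rho_t}(\ell))$ unchanged, as $\ell$ is null-homologous. The latter equals $0$ by hypothesis, and since $\rho_t(m)$ is hyperbolic I can choose the lift whose restriction to $m$ has translation number $0$. With this lift both translation numbers vanish, so the assignment $t \mapsto (\ln \lambda_m, \ln \lambda_\ell)$ is exactly the map defining $H_{0,0}(M)$, and $\rho_t$ descends to an arc $\gamma$ in the holonomy extension locus.

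Next I would pin down the shape of $\gamma$. By Lemma 3.6 of \cite{gao}, an arc of real representations limiting to an ideal point that detects a surface of boundary slope $r$ is asymptotic to the line through the origin of slope $-r$; here $r = 0$, so $\gamma$ is asymptotic to the $x$-axis, giving the desired horizontal asymptote. That $\gamma$ is non-horizontal follows from the hypothesis that $\text{tr}(\ell)$ is non-constant on the component: writing $\text{tr}(\rho_t(\ell)) = \pm(\lambda_\ell + \lambda_\ell^{-1})$, non-constancy forces $\ln \lambda_\ell$ (the $y$-coordinate of $\gamma$) to be non-constant and generically nonzero, so $\gamma$ is not contained in the $x$-axis. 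Finally I would extract orderability of the fillings. A point $(\ln\lambda_m, \ln\lambda_\ell)$ of $H_{0,0}(M)$ lying on the line $px + qy = 0$ of slope $-p/q = -r$ satisfies $p \ln\lambda_m + q\ln\lambda_\ell = 0$, and together with $p\,\text{trans}(m) + q\,\text{trans}(\ell) = 0$ this forces $\widetilde{\rho}(m^p\ell^q) = \mathrm{id}$; hence $\widetilde{\rho}$ factors through a nontrivial representation $\pi_1(M(r)) \to \widetilde{PSL_2(\mathbb{R})}$, which by \cite{gao, cullerdunfield, wiest} makes $\pi_1(M(r))$ left-orderable. To realize every sufficiently small $r$ on one side of $0$, I would parametrize $\gamma$ near the ideal point as a graph $y = f(x)$ with $f(x) \to 0$ as $x \to +\infty$ and $f$ of constant sign; the line $y = -rx$ then lies on opposite sides of $\gamma$ at the finite end and as $x \to \infty$ for all small $r$ of the appropriate sign, so the intermediate value theorem yields an intersection point, and hence a left-orderable filling $M(r)$, for every $r$ in a one-sided interval $(0,a)$ (or $(a,0)$). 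This is exactly half-orderability near $0$.

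The main obstacle I anticipate is the bookkeeping in the second step: correctly identifying the lift of $\rho_t$ that makes both boundary translation numbers vanish, and confirming that the resulting log-eigenvalue coordinates are precisely those used to define $H_{0,0}(M)$ in \cite{gao}, so that Lemma 3.6 applies verbatim. Equally delicate is verifying that the asymptotic side of $\gamma$ is consistent with a single one-sided interval of slopes $r$, and that the representations at the intersection points remain nonabelian along the arc (so that the filled fundamental groups are nontrivial and Wiest's criterion genuinely applies) rather than degenerating to reducibles.
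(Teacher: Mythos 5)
Your proposal tracks the paper's own proof almost step for step: the meridian trace blowing up at the ideal point forces boundary-hyperbolicity, the homological normalization of the lift (using that $m$ generates $H_1(M)$ while $\ell$ is null-homologous) places the arc in $H_{0,0}(M)$, Lemma 3.6 of \cite{gao} gives the horizontal asymptote at the $x$-axis, non-constancy of $\text{tr}(\ell)$ rules out the arc being contained in the $x$-axis, and intersecting with lines of slope $-r$ produces representations of the fillings. Your hands-on derivation that $\widetilde{\rho}(m^p\ell^q) = \mathrm{id}$ at an intersection point (the eigenvalue condition together with vanishing translation numbers forces a central element of translation number zero) is a correct unwinding of what the paper simply cites as Lemma 3.8 of \cite{gao}.

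There is, however, one genuine gap in your final step. A nontrivial homomorphism $\pi_1(M(r)) \to \widetilde{PSL_2(\mathbb{R})}$ does not by itself imply left-orderability: the criterion from \cite{wiest} that you invoke requires the closed manifold $M(r)$ to be irreducible as a 3-manifold. If $M(r)$ were a nontrivial connected sum, the representation could be carried entirely by one free factor of $\pi_1(M(r))$ while another factor contains torsion, and the group would fail to be left-orderable despite admitting the nontrivial homomorphism. Your anticipated-obstacles paragraph worries instead about the \emph{representation} degenerating to a reducible (abelian) one — that is a different and easier issue, and not the relevant hypothesis. The paper closes this gap by citing Theorem 1.2 of \cite{reducible}, which guarantees that at most three Dehn fillings of $M$ yield reducible manifolds, and then shrinking $a$ so that every slope in the interval between $0$ and $a$ gives an irreducible filling; only after that does Lemma 3.8 of \cite{gao} (equivalently, your argument plus the criterion of \cite{wiest}) apply. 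The fix is one sentence, but it is a necessary hypothesis, not a formality, and your proof is incomplete without it.
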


\begin{proof}[Proof of Theorem \ref{thm:main}]
By the proof of Theorem 2.2.1 in \cite{cullershalen}, at the ideal point, the trace of the meridian approaches infinity, so near the ideal point the meridian maps to a hyperbolic matrix. Since hyperbolic matrices can only commute with hyperbolic matrices, $\rho_t$ restricted to the boundary torus is hyperbolic. Since the meridian is the generator of $H^1(M)$, we may modify the lift $\widetilde{\rho_t}$ so that the translation number of the meridian is 0. (See Section 3.4 of \cite{cullerdunfield} for more details.) We thus have an arc of $\widetilde{PSL_2(\mathbb{R})}$ representations mapping to an arc in $H_{0, 0}(M)$ that goes off to infinity, whose image in $SL_2(\mathbb{R})$ approaches an ideal point $x$ of $X(M)$ detecting the Seifert surface, i.e. a surface with boundary slope 0. By Lemma 3.6 in \cite{gao}, this arc approaches the horizontal $x$-axis as an asymptote. The arc is not the horizontal $x$-axis, since this would mean that $\text{tr}(\rho(\ell))$ was constantly equal to 2 on this arc, contradicting the assumption that $x$ is an ideal point on a component of $X(M)$ for which the trace of $\ell$ is nonconstant. Thus, there exists some $a \in \mathbb{R}$ such that for $r$ between 0 and $a$, the line through the origin with slope $-r$ passes through the arc. Since at most three Dehn fillings are reducible (Theorem 1.2 of \cite{reducible}), shrink $a$ to be small enough so that all surgeries between 0 and $a$ are irreducible. By Lemma 3.8 in \cite{gao}, this means that for all $r$ between 0 and $a$, $M(r)$ has left-orderable fundamental group, as desired. 
\end{proof}

The core difficulty in applying this theorem is to establish arcs of $SL_2(\mathbb{R})$-representations tending toward ideal points. In this paper, we will establish the existence of such arcs in the case of Seifert genus one. To do this, we first prove a key lemma.

\begin{lemma}\label{lma:offdiag}
Given 
\begin{equation}
	A = \begin{pmatrix}a & b \\ c & d \end{pmatrix} \ \ \ \ \ \Delta(A) = b - c
\end{equation}
Two parabolic or elliptic matrices $A, B \in SL_2(\mathbb{R})$ with the same trace are conjugate in $SL_2(\mathbb{R})$ if and only if $\text{sign}(\Delta(A)) = \text{sign}(\Delta(B))$. If the signs are opposite, then they are conjugate by a determinant -1 matrix in $GL_2(\mathbb{R})$.
\end{lemma}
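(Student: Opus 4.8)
The plan is to exhibit $\text{sign}(\Delta)$, together with the trace, as a complete conjugacy invariant for non-central elliptic and parabolic matrices. I would do this in two movements: first show that $\text{sign}(\Delta)$ is invariant under $SL_2(\mathbb{R})$-conjugation, and then show that each admissible trace supports exactly two non-central conjugacy classes, one for each sign.

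The heart of the argument is the invariance step, which I would phrase symplectically. Writing $J = \begin{pmatrix} 0 & 1 \\ -1 & 0 \end{pmatrix}$, the starting observation is that $\Delta(A) = -\text{tr}(JA)$, so $\Delta$ is the pairing of $A$ against the symplectic form. For $P \in SL_2(\mathbb{R})$ one has the identity $P^T J P = J$ (equivalent to $\det P = 1$), whence $P^{-1} J P = (P^TP)^{-1} J$. Substituting and using cyclicity of the trace, I would obtain
\begin{equation}
\Delta(PAP^{-1}) = -\text{tr}\big((P^TP)^{-1} M_s\big), \quad M_s := \tfrac12\big(JA + (JA)^T\big),
\end{equation}
where $(P^TP)^{-1}$ is symmetric positive definite and only the symmetric part $M_s$ of $JA$ survives. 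The crux is then a single determinant computation: using $ad - bc = 1$ one finds $\det M_s = 1 - \tfrac14\text{tr}(A)^2$, which is positive exactly when $A$ is elliptic and zero exactly when $A$ is parabolic. Hence $M_s$ is definite (elliptic) or rank-one semidefinite (parabolic), with the sign of definiteness read off from $\text{tr}(M_s) = -\Delta(A)$. Since $\text{tr}(SM_s)$ keeps a fixed sign for every positive definite $S$ when $M_s$ is (semi)definite, the displayed formula gives $\text{sign}(\Delta(PAP^{-1})) = \text{sign}(\Delta(A))$, settling the ``only if'' direction. The same computation shows $\Delta(A) = 0$ forces $M_s = 0$, i.e. $A = \pm I$, so $\Delta$ is nonzero away from the center.

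For the converse I would reduce to normal forms. An elliptic $A$ of trace $2\cos\theta$ fixes an interior point of $\mathbb{H}^2$ and is $SL_2(\mathbb{R})$-conjugate to a rotation $\begin{pmatrix} \cos\theta & \mp\sin\theta \\ \pm\sin\theta & \cos\theta\end{pmatrix}$; a non-central parabolic of trace $2$ fixes a boundary point and is conjugate to $\begin{pmatrix} 1 & 1 \\ 0 & 1 \end{pmatrix}$ or $\begin{pmatrix} 1 & -1 \\ 0 & 1 \end{pmatrix}$ (and trace $-2$ to their negatives). In each case the two models have opposite $\Delta$-signs, so by the invariance just proved, $\text{sign}(\Delta(A))$ determines which model $A$ is conjugate to; thus two matrices of equal trace and equal $\Delta$-sign are conjugate to a common model, hence to each other. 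For the opposite-sign case I would use that conjugation by $\mathrm{diag}(1,-1)$ sends each model to the other sign, so composing the two $SL_2(\mathbb{R})$-conjugations with this determinant $-1$ conjugation produces an explicit $GL_2(\mathbb{R})$ element of determinant $-1$ carrying $A$ to $B$. The main obstacle is the invariance computation: one must spot the invariant formulation $\Delta(A) = -\text{tr}(JA)$ and recognize that the definiteness of $M_s$, governed by the clean identity $\det M_s = 1 - \tfrac14\text{tr}(A)^2$, is precisely what fails in the hyperbolic case, which is exactly why the hypothesis restricts to elliptic and parabolic matrices. Some care is also needed at the parabolic boundary, where $M_s$ is only rank-one semidefinite, to confirm that $\text{tr}(SM_s)$ stays strictly signed for non-central $A$.
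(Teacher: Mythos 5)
Your proof is correct, but it takes a genuinely different route from the paper's. The paper argues by direct computation: it conjugates the normal forms $\begin{pmatrix}1 & x \\ 0 & 1\end{pmatrix}$ and the rotation by $\theta$ by an arbitrary invertible matrix $\begin{pmatrix}a & b \\ c & d\end{pmatrix}$ and reads off $\Delta$ of the result, namely $\frac{a^2+c^2}{ad-bc}\,x$ in the parabolic case and $-\frac{a^2+b^2+c^2+d^2}{ad-bc}\sin\theta$ in the elliptic case, so that the sign of $\Delta$ is preserved exactly when the conjugator has positive determinant; this single formula delivers both the $SL_2(\mathbb{R})$ statement and the determinant $-1$ statement simultaneously. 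You instead prove invariance abstractly: from $\Delta(A) = -\mathrm{tr}(JA)$ and $\Delta(PAP^{-1}) = -\mathrm{tr}\bigl((P^TP)^{-1}M_s\bigr)$, the sign is controlled by the definiteness of the symmetric part $M_s$ of $JA$, and your identity $\det M_s = 1 - \tfrac14\mathrm{tr}(A)^2$ shows definiteness holds precisely in the elliptic/parabolic range; normal forms enter only in your classification step. Your approach buys two things the paper's does not make explicit: a conceptual explanation of why the lemma fails for hyperbolic matrices ($M_s$ becomes indefinite, so $\mathrm{tr}(SM_s)$ can take either sign), where the paper's follow-up remark only observes that hyperbolic matrices are diagonalizable; and the fact that $\Delta = 0$ forces $A = \pm I$, so the ``sign'' in the statement is genuinely well-defined on non-central elements. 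The paper's computation, in exchange, is more elementary (no symplectic identity, square roots of positive definite matrices, or trace positivity facts) and treats the determinant $+1$ and $-1$ cases uniformly, whereas you must compose your $SL_2(\mathbb{R})$ conjugations with $\mathrm{diag}(1,-1)$ separately to handle the opposite-sign case.
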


\begin{proof}
We start with parabolic matrices. Suppose $a, b, c, d \in \mathbb{R}$ with $ad - bc \neq 0$. Notably, this means that we cannot have $a = c = 0$. For a parabolic matrix:
\begin{equation}
	M = \begin{pmatrix}a & b \\ c & d\end{pmatrix}\begin{pmatrix}1 & x \\ 0 & 1\end{pmatrix}\begin{pmatrix}d & -b \\ -c & a\end{pmatrix} = \begin{pmatrix}1 - \frac{ac}{ad-bc}x & \frac{a^2}{ad-bc}x \\ -\frac{c^2}{ad-bc}x & 1 + \frac{ac}{ad-bc}x \end{pmatrix}
\end{equation}
Then $\Delta(M) = \frac{1}{ad-bc}(a^2 + c^2)x$. The sign of $\Delta(M)$ matches the sign of $x$ if and only if $ad-bc > 0$, as desired. A similar proof works for when the trace of a parabolic matrix is -2. For an elliptic matrix:
\begin{align}
	M = \begin{pmatrix}a & b \\ c & d\end{pmatrix}&\begin{pmatrix}\cos\theta & -\sin\theta \\ \sin\theta & \cos\theta\end{pmatrix}\begin{pmatrix}d & -b \\ -c & a\end{pmatrix} \\ &= \frac{1}{ad-bc}\begin{pmatrix}(ad-bc)\cos\theta + (ac+bd)\sin\theta & -(a^2+b^2)\sin\theta \\ (c^2+d^2)\sin\theta & (ad-bc)\cos\theta - (ac+bd)\sin\theta\end{pmatrix}
\end{align}
Then $\Delta(M) = -\frac{1}{ad-bc}(a^2+b^2+c^2+d^2)\sin\theta$, whose sign is equal to $-2\sin\theta$ if and only if $ad-bc > 0$, as desired. 
\end{proof}

\begin{remark}
This proof fails for hyperbolic matrices, especially since they are conjugate to diagonal matrices, with both off-diagonal entries equal to 0. By the proof of Theorem 4.3 in \cite{goldman}, two representations $\rho_1, \rho_2: G \to SL_2(\mathbb{R})$ with the same character functions are conjugate by either a determinant +1 or -1 matrix in $SL_2(\mathbb{R})$. By the above lemma, the off-diagonal difference $\Delta$ for elliptic or parabolic matrices, \emph{but not hyperbolic matrices}, act as ``barometers" for whether or any conjugating matrix has determinant +1 or -1. This concept will be key in generating arcs of $SL_2(\mathbb{R})$-representations coming from ideal points.
\end{remark}

%

%

We now introduce terminology and notation that will be prominent in the construction of $SL_2(\mathbb{R})$-arcs tending toward ideal points. 

\medskip

Let $F_2 = \langle a, b \rangle$ be the free group on two generators. Take a representation $\rho: \pi_1(M) \to SL_2(\mathbb{C})$, and let $x = \text{tr}(\rho(a)), y = \text{tr}(\rho(b)), z = \text{tr}(\rho(ab))$. Given a word $w \in F_2$, the trace of $\rho(w)$ can be written as a polynomial in $x, y, z$ by applying the relation
\begin{equation}
	\text{tr}(X)\text{tr}(Y) = \text{tr}(XY) + \text{tr}(XY^{-1}) \ \ \ \ \ X, Y \in SL_2(\mathbb{C})
\end{equation}
For shorthand, we will denote this polynomial $\text{tr}(w)$. 

\medskip

The \emph{free Seifert genus} of a knot is the minimum genus of a free Seifert surface, i.e. a surface whose complement is a handlebody. In particular, the complement of a genus one free Seifert surface is a genus two handlebody $H$, whose fundamental group is isomorphic to the free group on two generators, denoted $\pi_1(H) = \langle a, b \rangle$. Then the character variety is $\mathbb{C}^3$, with coordinates given by $\text{tr}(a), \text{tr}(b), \text{tr}(ab)$. 

\begin{definition}
Given a group $G$, $\varphi: A_1 \to A_2$ an isomorphism between two subgroups, the \emph{HNN extension of $G$ with respect to $\varphi$} is $G$, with an additional group element $t$ such that for all $a \in A_1$, $tat^{-1} = \varphi(a)$. 
\end{definition} 

One hopes to construct $SL_2(\mathbb{R})$-representations by expressing $\pi_1(M)$ as an HNN extension of a gluing map between two rank-2 free subgroups of $\pi_1(H)$ corresponding to the cut punctured tori. However, if we take a representation $\pi_1(H) \to SL_2(\mathbb{R})$, it is unclear that the conjugating matrix between these two rank-2 free subgroups lies in $SL_2(\mathbb{R})$ or not. This is a subtle difference between $SL_2(\mathbb{R})$ and $SL_2(\mathbb{C})$; for two irreducible representations in $SL_2(\mathbb{C})$, the character function alone is enough to determine the conjugacy class. For irreducible $SL_2(\mathbb{R})$-representations, there are two conjugacy classes with the same character function, differing by a $GL_2(\mathbb{R})$ conjugator with determinant -1. (In $SL_2(\mathbb{C})$, an equivalent conjugation is by a matrix whose entries are all pure imaginary.) If the translation number of the longitude were nonzero, the sign of the translation number would suffice to differentiate the $SL_2(\mathbb{R})$ conjugacy classes of representations. However, we are in the case where the translation number of the longitude is zero, making the situation more subtle. The main result of this section is that in the simplest situation, we can still upgrade an $SL_2(\mathbb{R})$-representation on $\pi_1(H)$ to an $SL_2(\mathbb{R})$-representation on the HNN extension $\pi_1(M)$. 

\begin{definition}
Let $M'$ be either a hyperbolic 3-manifold with a split real place or Seifert-fibred over a hyperbolic 2-orbifold  If $M'$ is Seifert-fibered over a hyperbolic 2-orbifold $\mathcal{O}$, we say that a \emph{real holonomy representation of $M'$} is a lift to $SL_2(\mathbb{R})$ of the representation to $PSL_2(\mathbb{R})$ which is the identity on the regular fiber and equal to the holonomy representation on $\mathcal{O}$. If $M'$ is a hyperbolic 3-manifold with a real place, a \emph{real holonomy representation} is a real Galois conjugate of its holonomy representation, which goes into $SL_2(\mathbb{R})$.
\end{definition}

Recall the following setup. Suppose $M$ is an integral homology solid torus with a nonfiber free genus one Seifert surface $S$, and that $M(0)$ has only one JSJ torus. Then $M \setminus S$ is homeomorphic to a genus two handlebody. Under the Dehn filling $M(0)$, the JSJ component $M'$ is a 3-manifold with two torus boundary components. This manifold $M'$ can be obtained form $M \setminus S$ by attaching a 2-handle to the curve corresponding to the homological longitude of $M$, i.e. the boundary component of $S$. This induces a quotient map $\pi_1(M \setminus S) \twoheadrightarrow \pi_1(M')$, which kills that curve. Since $M$ is nonfibered, $M'$ is not a thickened torus. We will denote the two copies of $\pi_1(S)$ inside $\pi_1(M \setminus S)$ as $\langle m_1, \ell_1 \rangle$ and $\langle m_2, \ell_2 \rangle$. This means that the homological longitude of $M$ can be expressed as the commutator $[m_1, \ell_1] = [m_2, \ell_2]$. 

\begin{theorem}\label{thm:arc}
Let $M$ be an integral homology solid torus with a nonfiber free genus one Seifert surface $S$. Suppose that $M(0)$ has only one JSJ torus, and that its JSJ complementary region $M'$ is not a hyperbolic 3-manifold with no real places. Let $\rho: \pi_1(M \setminus S) \twoheadrightarrow \pi_1(M') \to SL_2(\mathbb{R})$ be a real holonomy representation of $M'$ composed with the natural quotient map, $\chi$ be the character of $\rho$, and $C \subset \mathbb{C}^3$ be the Zariski closure of the image of $r: X(M) \to \mathbb{C}^3$. If $\varphi$ is the gluing map between once-punctured tori on $M \setminus S$ to create $M$, say it maps $m_1$ to $m_2$ and $\ell_1$ to $\ell_2$, so $C$ is the curve defined by $\text{tr}(m_1) = \text{tr}(m_2), \text{tr}(\ell_1) = \text{tr}(\ell_2), \text{tr}(m_1\ell_1) = \text{tr}(m_2\ell_2)$. Suppose $\chi \in C$ is a smooth point, there exists some word $w$ in $m_1, \ell_1$ such that $\Delta(\rho(w)) = \Delta(\varphi(\rho(w)))$ and $\text{tr}(w)$ is a local coordinate on $C$, and that $\text{tr}([m_1, \ell_1])$ is not constantly equal to 2 near $\chi$. Then:
\begin{enumerate}
	\item $S$ is detected by an ideal point on a component of $X(M)$ for which the trace of the longitude is nonconstant, and the limiting character is $\chi$.
	\item There exists an arc of representations $\rho_t: \pi_1(M \setminus S) \to SL_2(\mathbb{R})$, where $\rho_0 = \rho$, such that the conjugating matrix between $\rho_t|_{\langle m_1, \ell_1 \rangle}$ and $\rho_t|_{\langle m_2, \ell_2 \rangle}$ lies in $SL_2(\mathbb{R})$
	\item Tor all lifts $\widetilde{\rho_t}$ to $\widetilde{PSL_2(\mathbb{R})}$, $\text{trans}(\widetilde{\rho_t}([m_1, \ell_1])) = 0$
\end{enumerate} 
\end{theorem}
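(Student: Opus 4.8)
The plan is to establish the three conclusions in a natural dependency order: first use Theorem \ref{thm:genusone} to extract the ideal-point detection statement (1), then construct the $SL_2(\mathbb{R})$-arc of (2) using the smoothness of $\chi$ together with Lemma \ref{lma:offdiag}, and finally read off the translation-number condition (3) from the behavior of the longitude along the arc.

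For conclusion (1), I would verify that the hypotheses of Theorem \ref{thm:genusone} hold in this setting. The real holonomy representation $\rho$ of the JSJ complementary region $M'$ gives a holonomy trace point $\chi$ lying in the image curve $C = V$, and the assumption that $\text{tr}([m_1, \ell_1])$ is nonconstant near $\chi$ is precisely the condition that the trace of the longitude $\ell = [m_1, \ell_1]$ is nonconstant near $\chi$ (recall the longitude is the commutator of the once-punctured torus generators). Theorem \ref{thm:genusone} then directly yields that $S$ is detected by an ideal point $x$ on a component where $\text{tr}(\ell)$ is nonconstant, with limiting character $\chi$. So (1) is essentially a packaging of the earlier theorem; the only care needed is to confirm the restriction-map setup matches (connected Seifert surface, single JSJ torus).

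The technical heart is conclusion (2): producing an honest arc of $SL_2(\mathbb{R})$-representations of the \emph{HNN extension} $\pi_1(M)$, not merely of the handlebody $\pi_1(M \setminus S)$. Since $\chi$ is a smooth point of the curve $C$ and $\text{tr}(w)$ is a local coordinate there, I would parametrize an analytic arc $\chi_t$ in the real points of $C$ through $\chi = \chi_0$, using $\text{tr}(w)$ as the parameter, and lift each $\chi_t$ to a representation $\rho_t : \pi_1(M\setminus S) \to SL_2(\mathbb{R})$ depending analytically on $t$ (real lifts exist because we began with a real holonomy representation and the place is split/real by hypothesis). The subtlety flagged in the remark after Lemma \ref{lma:offdiag} is that the gluing isomorphism $\varphi$ identifies $\langle m_1, \ell_1 \rangle$ with $\langle m_2, \ell_2 \rangle$ as \emph{abstract} groups with equal character functions, but the conjugating matrix realizing this identification in $SL_2(\mathbb{R})$ could a priori have determinant $-1$. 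This is exactly where the off-diagonal barometer $\Delta$ comes in: by Goldman's result the conjugator lies in $GL_2(\mathbb{R})$ with determinant $\pm 1$, and Lemma \ref{lma:offdiag} says that for parabolic or elliptic matrices the sign of $\Delta(\rho_t(w))$ relative to $\Delta(\varphi(\rho_t(w)))$ detects whether the conjugator has determinant $+1$. The hypothesis $\Delta(\rho(w)) = \Delta(\varphi(\rho(w)))$ (matching signs) at $t=0$ guarantees the conjugator is genuinely in $SL_2(\mathbb{R})$ there, and by continuity of $\Delta$ along the analytic arc this persists on a neighborhood of $0$, after possibly shrinking the arc. I would therefore need to argue that $\rho_t(w)$ stays elliptic or parabolic (so that Lemma \ref{lma:offdiag} applies) along the arc, or at least that the sign equality is preserved; this is the step I expect to be the main obstacle, since one must rule out $\rho_t(w)$ becoming hyperbolic, where $\Delta$ ceases to be a reliable barometer. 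The HNN structure then lets me assemble these compatibly-conjugate pieces into a representation of $\pi_1(M)$ itself.

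For conclusion (3), since the arc limits toward the ideal point $x$ detecting the Seifert surface $S$, whose boundary slope is $0$, the longitude $\ell = [m_1, \ell_1]$ is a curve carried on $S$ and hence capped off in $M(0)$. Along the arc, $\ell$ lies in the image of $\pi_1(S)$, where the limiting behavior forces $\rho_t([m_1,\ell_1])$ to be parabolic or to have trace bounded near $2$; combined with the fact that the meridian (not $\ell$) carries the homology, the lift of $\ell$ to $\widetilde{PSL_2(\mathbb{R})}$ has vanishing translation number. Concretely, I would check that $\text{trans}(\widetilde{\rho_t}([m_1,\ell_1]))$ is a continuous integer-or-half-integer-valued quantity that equals $0$ at the base representation (where $\rho_0$ is a holonomy representation of $M'$ with $[m_1,\ell_1]$ killed in $\pi_1(M')$, so its image is $\pm I$ and lifts to translation number $0$), and is therefore identically $0$ along the connected arc. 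The independence of the choice of lift follows because changing the lift shifts the translation number by an integer while the computed value is pinned at $0$.
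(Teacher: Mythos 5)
Your outline follows the same skeleton as the paper's proof: conclusion (1) by invoking Theorem \ref{thm:genusone}, conclusion (2) via Goldman's conjugacy theorem combined with Lemma \ref{lma:offdiag}, and conclusion (3) by continuity from the base representation. However, there are two genuine gaps, one in each of (2) and (3), and in both cases the missing ingredient is an argument the paper actually supplies.

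In (2), you correctly flag the main obstacle --- keeping $\rho_t(w)$ elliptic or parabolic so that $\Delta$ remains a valid barometer --- but you leave it unresolved, and this is precisely where the hypothesis that $\text{tr}(w)$ is a local coordinate on $C$ earns its keep. The paper's resolution is to use the local coordinate (via Lemma 2.8 of \cite{cullerdunfield}) to \emph{steer} the deformation: one chooses the arc $\chi_t$ in the real points of $C$ so that $|\chi_t(w)| < 2$ and $|\chi_t(\varphi(w))| < 2$ for $t > 0$, which makes $\rho_t(w)$ and $\rho_t(\varphi(w))$ genuinely elliptic along the arc. The local coordinate is not merely a parametrization device; it is what lets you force the trace of $w$ into the elliptic range. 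A second, smaller inaccuracy: at $t = 0$ there is no $SL_2(\mathbb{R})$ conjugator to speak of, since $\rho_0([m_1,\ell_1]) = I$ makes the restrictions reducible (indeed, in the paper's examples the two restrictions at $t=0$ are non-conjugate, which is exactly why $\chi$ is a limiting character of an ideal point rather than the character of a representation of $\pi_1(M)$). The sign hypothesis at $t = 0$ is used only to guarantee, by continuity, that the signs of $\Delta$ still agree for $t > 0$, where irreducibility (from $\text{tr}([m_1,\ell_1])$ not constantly $2$) plus Goldman's theorem gives a conjugator of determinant $\pm 1$, and Lemma \ref{lma:offdiag} then excludes determinant $-1$.

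In (3), your argument rests on a false premise: the translation number is \emph{not} an integer-or-half-integer-valued function on $\widetilde{PSL_2(\mathbb{R})}$. It is integer-valued on hyperbolic and parabolic elements, but on elliptic elements it varies continuously through arbitrary real values, so ``continuous and quantized, hence constant'' fails unless you first show that $\widetilde{\rho_t}([m_1,\ell_1])$ is never elliptic for $t > 0$. This is the step the paper supplies: as $t \to 0^+$ the corresponding characters of $\pi_1(M)$ approach the ideal point, where by the proof of Theorem 2.2.1 of \cite{cullershalen} the trace of the meridian blows up, so for small $t > 0$ the meridian maps to a hyperbolic matrix; since the longitude commutes with the meridian and only hyperbolic matrices commute with hyperbolic matrices, $\rho_t([m_1,\ell_1])$ is hyperbolic as well. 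Only then does integrality of the translation number apply, and combined with continuity and the value $0$ at $t = 0$ (via Claim 8.5 of \cite{cullerdunfield}, since $\rho_0([m_1,\ell_1]) = I$), it forces the translation number to vanish along the whole arc. Without this meridian-commutation argument, conclusion (3) does not follow from what you have written.
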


\begin{proof}
The first assertion follows from Theorem \ref{thm:genusone}. To prove the second assertion, notice that $\rho(w)$ is conjugate to $\rho(\varphi(w))$ by Lemma \ref{lma:offdiag}.
Since $\text{tr}(w)$ is a local coordinate on $C$ with real coordinates in $\mathbb{C}^3$, by Lemma 2.8 in \cite{cullerdunfield}, we may smoothly deform $\chi$ to the arc of traces $\chi_t$, $t \in [0, \epsilon)$ such that $|\chi_t(w)|, |\chi_t(\varphi(w))| < 2$ and $\chi_t \in C$. Since $\rho$ is either a holonomy representation of a 2-orbifold or a real conjugate of the holonomy of a hyperbolic 3-manifold, its image must contain a hyperbolic matrix. Then in a neighborhood of $\chi$, by Theorem 4.3 of \cite{goldman}, all the $\chi_t$ must lift to $\rho_t: \pi_1(M \setminus S) \to SL_2(\mathbb{R})$. Then $\rho_t(w)$ and $\rho_t(\varphi(w))$ are elliptic matrices. By assumption, the trace of $[m_1, \ell_1] \in \pi_1(M \setminus S)$, must also not be constantly equal to 2 on this arc. Note that $\rho$ is irreducible, and so the character map $\rho \mapsto \chi_\rho$ is a local homeomorphism at $\rho$. Then $\Delta(\rho_t(w))$ and $\Delta(\rho_t(\varphi(w)))$ still have the same sign due to the continuity of the deformation. By Lemma \ref{lma:offdiag}, $\rho_t(w)$ and $\rho_t(\varphi(w))$ remain conjugate. Now let $\rho_{1, t} = \rho_t|_{\langle m_1, \ell_1 \rangle}$ and $\rho_{2, t} = \rho_t|_{\langle m_2, \ell_2 \rangle}$ for $t \in (0, \epsilon)$; since the trace of $[m_1, \ell_1] = [m_2, \ell_2]$ is not constantly equal to 2, $\rho_{1, t}$ and $\rho_{2, t}$ are irreducible for $t$ small enough. Since $\rho_{1, t}$ and $\rho_{2, t}$ have the same character function and are irreducible, they are conjugate by either a determinant 1 or -1 matrix, by the proof of Theorem 4.3 in \cite{goldman}. Since $\rho(w)$ and $\rho(\varphi(w))$ are conjugate elliptic matrices, $\rho_{1, t}$ and $\rho_{2, t}$ cannot be conjugate by a determinant -1 matrix, again by Lemma \ref{lma:offdiag}. So $\rho_1$ and $\rho_2$ must be conjugate by a matrix in $SL_2(\mathbb{R})$. Then $\rho_t$ extends to an $SL_2(\mathbb{R})$-representation $\sigma_t$ of $\pi_1(M)$, which is the HNN extension of $F_2$ with $\langle m_1, \ell_1 \rangle$ glued to $\langle m_2, \ell_2 \rangle$. We finally prove that when lifting $\rho_t$ to $\widetilde{\rho_t}: \pi_1(M) \to \widetilde{PSL_2(\mathbb{R})}$, $\text{trans}(\widetilde{\rho}([m_1, \ell_1])) = 0$. By the Milnor-Wood bound stated as Proposition 6.5 of \cite{cullerdunfield}, this translation number must have translation number either -1, 0, or 1. However, by Claim 8.5 of \cite{cullerdunfield}, since $\rho([m_1, \ell_1]) = I$, for any lift $\widetilde{\rho}$, $\text{trans}(\widetilde{\rho}([m_1, \ell_1])) = 0$. Since the meridian $\mu \in \pi_1(M)$ intersects the longitude, by the proof of Theorem 2.2.1 of \cite{cullershalen}, on $X(M)$, $\text{tr}(\sigma_t(\mu))$ approaches infinity as the ideal point detecting $S$ is approached; in particular, near enough to the ideal point, $\sigma_t(\mu)$ is hyperbolic. However, only hyperbolic matrices can commute with hyperbolic matrices, so the homological longitude $[m_1, \ell_1]$ must also be mapped to a hyperbolic matrix under $\rho_t$. Since the translation number is continuous on $\widetilde{PSL_2(\mathbb{R})}$-representations and integral on hyperbolic elements of $\widetilde{PSL_2(\mathbb{R})}$, we conclude that after choosing $\epsilon$ small enough, for all $t \in [0, \epsilon)$, $\text{trans}(\widetilde{\rho_t}([m_1, \ell_1])) = 0$, as desired. 
\end{proof}

Theorems \ref{thm:main} and \ref{thm:arc} lead to the following corollary:

\begin{corollary}\label{cor:general}
Let $S$ be a nonfiber free genus one Seifert surface in an integer homology 3-sphere $M$ such that $M(0)$ has only one JSJ torus, $M'$ be the JSJ component of $M(0)$ that is not hyperbolic with no real place, $\chi$ the trace of a real holonomy of $M'$, and $\varphi: T_1 \to T_2$ the gluing map between the two torus boundary components of $M'$. Suppose that $\chi$ is a smooth point in the Zariski-closure of the image of the restriction map $r: X(M) \to X(M \setminus S)$, there exists some word $w \in \pi_1(T_1)$ such that $\Delta(\rho(w)) = \Delta(\varphi(\rho(w)))$ and $\text{tr}(w)$ is a local coordinate, and the trace of the longitude of $M$ is not constantly equal to 2 near $\chi$. Then $H_{0, 0}(M)$ admits a non-horizontal arc which has a horizontal asymptote at the $x$-axis, and $M$ is half-orderable near 0.
\end{corollary}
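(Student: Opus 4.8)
The plan is to show that Corollary \ref{cor:general} is essentially an immediate consequence of chaining together Theorem \ref{thm:arc} and Theorem \ref{thm:main}; the corollary merely repackages their hypotheses into a single statement phrased entirely in terms of the geometry of $M$, $M'$, and the gluing map $\varphi$, so the work is to verify that each hypothesis of Corollary \ref{cor:general} supplies exactly what the two theorems need as input. First I would observe that the setup of the corollary is precisely the ``Recall the following setup'' paragraph preceding Theorem \ref{thm:arc}: $M$ is an integral homology solid torus (here an integer homology sphere, a stronger condition, so in particular $H^2(\pi_1(M);\mathbb{Z}) = 0$ and all $SL_2(\mathbb{R})$ representations lift) with a nonfiber free genus one Seifert surface $S$, $M(0)$ has a single JSJ torus, and $M'$ is the JSJ complementary region obtained by attaching a $2$-handle to the longitude of $M$. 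The remaining hypotheses --- $\chi$ is a smooth point of the Zariski closure $C$ of $\mathrm{im}(r)$, the existence of a word $w \in \pi_1(T_1) = \langle m_1, \ell_1 \rangle$ with $\Delta(\rho(w)) = \Delta(\varphi(\rho(w)))$ and $\mathrm{tr}(w)$ a local coordinate, and $\mathrm{tr}([m_1,\ell_1])$ not constantly $2$ near $\chi$ --- match one-to-one the hypotheses of Theorem \ref{thm:arc}, after identifying $\pi_1(T_1)$ with the subgroup $\langle m_1, \ell_1 \rangle \subset \pi_1(M \setminus S)$ and the longitude of $M$ with the commutator $[m_1,\ell_1]$.

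With that dictionary in hand, I would apply Theorem \ref{thm:arc} directly. Its conclusion (1) gives that $S$ is detected by an ideal point $x$ on a component of $X(M)$ on which $\mathrm{tr}(\ell)$ is nonconstant, with limiting character $\chi$. Its conclusions (2) and (3) produce an arc $\sigma_t : \pi_1(M) \to SL_2(\mathbb{R})$ (the HNN extension of the arc $\rho_t$, using that the conjugating matrix between $\rho_t|_{\langle m_1,\ell_1\rangle}$ and $\rho_t|_{\langle m_2,\ell_2\rangle}$ lies in $SL_2(\mathbb{R})$) whose characters limit toward $x$ and for which every lift $\widetilde{\sigma_t}$ to $\widetilde{PSL_2(\mathbb{R})}$ has $\mathrm{trans}(\widetilde{\sigma_t}([m_1,\ell_1])) = 0$. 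The one point requiring a sentence of care is that the arc constructed in Theorem \ref{thm:arc} is a priori an arc on $\pi_1(M\setminus S)$ that extends to $\pi_1(M)$; I would note explicitly that this extension $\sigma_t$ is the arc of $\pi_1(M)$-representations required, and that $[m_1,\ell_1]$ is the homological longitude $\ell$ of $M$, so that the translation-number statement is exactly $\mathrm{trans}(\widetilde{\sigma_t}(\ell)) = 0$.

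Finally I would feed these outputs into Theorem \ref{thm:main}. That theorem requires: $M$ an integral homology solid torus (yes), a nonfiber genus $g$ Seifert surface (here $g = 1$), detection by an ideal point on a longitude-nonconstant component (from \ref{thm:arc}(1)), and an arc $\rho_t : \pi_1(M) \to SL_2(\mathbb{R})$ limiting toward $x$ with $\mathrm{trans}(\widetilde{\rho_t}(\ell)) = 0$ for every lift (from \ref{thm:arc}(2)--(3), taking $\rho_t = \sigma_t$). Theorem \ref{thm:main} then yields both conclusions of the corollary verbatim: $H_{0,0}(M)$ admits a non-horizontal arc with a horizontal asymptote at the $x$-axis, and $M$ is half-orderable near $0$. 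I do not expect a genuine obstacle here, since the corollary is a formal composition; the only real content is bookkeeping. The subtlest step to state cleanly is the identification that the hypothesis ``$\chi$ is a smooth point in the Zariski-closure of $\mathrm{im}(r : X(M) \to X(M\setminus S))$'' is compatible with the $\mathbb{C}^3$-coordinate description of $C$ used in Theorem \ref{thm:arc} --- that is, that the free genus one assumption makes $X(M \setminus S) = \mathbb{C}^3$ so the two smoothness conditions coincide --- and this is exactly the reduction carried out in the paragraph introducing the coordinates $\mathrm{tr}(a), \mathrm{tr}(b), \mathrm{tr}(ab)$.
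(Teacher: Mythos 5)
Your proposal is correct and matches the paper exactly: the paper offers no separate proof of Corollary \ref{cor:general}, stating only that it follows by combining Theorems \ref{thm:main} and \ref{thm:arc}, which is precisely the chaining you carry out. Your bookkeeping (identifying $\pi_1(T_1)$ with $\langle m_1,\ell_1\rangle$, the longitude with $[m_1,\ell_1]$, $X(M\setminus S)$ with $\mathbb{C}^3$, and noting that the HNN-extended arc $\sigma_t$ is the input to Theorem \ref{thm:main}) is the same implicit content the paper relies on.
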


\section{Examples}\label{sec:examples}

As an explicit example of Corollary \ref{cor:general} in action, we turn to odd classical pretzel knots. The $(2p+1, 2q+1, 2r+1)$ pretzel knots have free Seifert genus one, making them computationally easier to work with. Let $M_{2p+1, 2q+1, 2r+1}$ be the complement in $S^3$. Fundamental groups and JSJ decompositions of 0-surgeries on odd pretzel knots are well-understood, which is captured in the following theorem of \cite{sekino}:

\begin{theorem}[\cite{sekino}]
The group $\pi_1(M_{2p+1, 2q+1, 2r+1})$ can be written down as
\begin{equation}
	\pi_1(M_{2p+1, 2q+1, 2r+1}) = \langle a, b, t \mid ta^{r+1}(ba)^qbt^{-1} = a^{r+1}(ba)^q, tb^{p+1}(ab)^qt^{-1} = b^{p+1}(ab)^qa \rangle
\end{equation}
where $a, b$ are generators of the genus-two handlebody obtained by taking the complement of the genus 1 Seifert surface. (This is called the \emph{Lin presentation} of the fundamental group.) We have the following descriptions of JSJ decompositions of $M_{2p+1, 2q+1, 2r+1}(0)$. 
\begin{itemize}
	\item $M_{-3, 3, 2n+1}(0)$ has one JSJ component which is the $(2, 4)$ torus link, which is Seifert fibered over the annulus with a cone point of order 2.
	\item $M_{-3, 5, 5}(0)$ and $M_{3, -5, -5}(0)$ has two JSJ components: the trefoil knot complement and the trivial circle fiber over the thrice-punctured sphere, denoted $S^1 \times S_{0, 3}$. 
	\item For any other $(p, q, r)$, $M_{2p+1, 2q+1, 2r+1}(0)$ has one JSJ component which is a hyperbolic 3-manifold. 
\end{itemize}
\end{theorem}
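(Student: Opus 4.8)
The plan is to separate the statement into its algebraic half---the Lin presentation of $\pi_1(M_{2p+1,2q+1,2r+1})$---and its geometric half---the identification of the JSJ pieces of the $0$-surgery. The two are linked by the same cut-and-reglue picture used throughout Section \ref{sec:proof}: the genus-one Seifert surface $S$ presents the knot group as an HNN extension, and capping $S$ off in $M(0)$ produces the torus along which the decomposition is read.

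For the presentation I would start from the standard genus-one Seifert surface of the odd pretzel knot, two disks joined by three bands carrying $2p+1$, $2q+1$, $2r+1$ half-twists. Freeness of this surface makes $H = S^3 \setminus S$ a genus-two handlebody with $\pi_1(H) = \langle a, b\rangle$, while $\pi_1(S) = \langle x, y\rangle$ is free of rank two. The knot group is then the HNN extension of $\pi_1(H)$ with stable letter $t$ gluing the two push-off embeddings $i_\pm : \pi_1(S) \to \pi_1(H)$, so all the content lies in computing $i_+(x), i_-(x), i_+(y), i_-(y)$ as words in $a, b$. These I would read off by pushing the two core loops of the once-punctured torus through each twisted band and recording the resulting handlebody generators; the powers $a^{r+1}$, $b^{p+1}$ and the alternating syllables $(ba)^q$, $(ab)^q$ are exactly the bookkeeping of the half-twists, and substituting into $t\,i_+(x)\,t^{-1}=i_-(x)$ and $t\,i_+(y)\,t^{-1}=i_-(y)$ reproduces the two displayed relations. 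This part is routine once the band combinatorics are drawn correctly.

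For the JSJ decomposition I would use the observation, recorded in the setup preceding Theorem \ref{thm:arc}, that capping $S$ off in $M(0)$ yields a torus $\hat S$ and that cutting along it produces $M' = H \cup (\text{$2$-handle along }\partial S)$, a manifold with two torus boundary components and $\pi_1(M') = \pi_1(H)/\langle\langle [m_1,\ell_1]\rangle\rangle$. The JSJ of the closed manifold $M(0)$ is then assembled by identifying $M'$, locating any essential tori inside it, and regluing its two boundary tori along $\hat S$. The recurring subtlety is whether this regluing is fiber-preserving: when it is not, $\hat S$ survives as a JSJ torus. For $(-3,3,2n+1)$ I would show the one-relator group $\pi_1(M')$ collapses enough to exhibit a Seifert fibering over the annulus with one order-two cone point, identifying $M'$ with the $(2,4)$-torus link exterior, and then verify that the gluing is not fiber-preserving, so that $M'$ is the single JSJ piece. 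For $(\pm 3,\mp 5,\mp 5)$ I would instead find an essential torus inside $M'$ splitting it into the trefoil exterior and $S^1 \times S_{0,3}$, again with a non-fiber-preserving self-gluing along $\hat S$, yielding the stated two-component decomposition.

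The main obstacle is the generic case: proving that $M'$ is hyperbolic for every remaining triple $(p,q,r)$. This demands ruling out Seifert-fibered and toroidal structures uniformly across an infinite family before Thurston--Perelman hyperbolization applies. I would approach it by establishing that $M'$ is irreducible, atoroidal, and acylindrical directly from the structure of the defining relator, aiming to show that an essential annulus or vertical torus can occur only for the sporadic families $(-3,3,\ast)$ and $(\pm 3,\mp 5,\mp 5)$ isolated above. Because a purely combinatorial argument may not close the uniform gap, I expect to supplement it with geometric input---for instance a hyperbolicity check organized as a one-parameter family along the twist parameter, verified for small cases via SnapPy and extended by a Dehn-filling argument---to finish the generic case.
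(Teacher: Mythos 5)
First, a point of comparison: the paper does not prove this statement at all. It is imported wholesale from Sekino's work (the theorem is labeled \textup{[\cite{sekino}]}), and no proof, sketch, or verification appears anywhere in the text. So your proposal cannot be matched against an internal argument; it has to stand on its own as a proof of Sekino's theorem. Your algebraic half is the standard and correct route: a free genus-one Seifert surface exhibits the knot group as an HNN extension of $\pi_1(H) = \langle a, b\rangle$ over the two push-off embeddings $i_\pm : \pi_1(S) \to \pi_1(H)$, and reading the push-offs through the three twisted bands produces exactly the syllables $a^{r+1}(ba)^q$ and $b^{p+1}(ab)^q$ in the displayed relations. This is precisely how the Lin presentation is derived, and with the band combinatorics done carefully it is a complete argument.

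The geometric half, however, has a genuine gap that you yourself flag but do not close: the generic case, where you must prove that $M'$ is hyperbolic for \emph{every} triple outside the two sporadic families. Verifying small cases with SnapPy and ``extending by a Dehn-filling argument'' is not a proof as stated. The Dehn-filling strategy can in principle work --- the odd pretzel knots arise by $-1/k$ fillings on a fixed augmented link complement, so Thurston's hyperbolic Dehn surgery theorem gives hyperbolicity of the corresponding JSJ pieces for all but finitely many parameters --- but then the entire content of the theorem is in controlling the exceptional set and showing it is exactly $(-3,3,2n+1)$ and $(\pm 3, \mp 5, \mp 5)$, which your sketch does not do; an argument that leaves an unspecified finite list of possible exceptions proves a weaker statement than the one quoted. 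A second, smaller gap: for the sporadic families, showing that ``the one-relator group $\pi_1(M')$ collapses'' to a Seifert presentation does not by itself identify $M'$ with the $(2,4)$-torus-link exterior or with (trefoil exterior) $\cup$ ($S^1 \times S_{0,3}$). Since $M'$ has boundary it is Haken, so a group-level identification can be promoted to a homeomorphism by Waldhausen rigidity, but only if you also match the peripheral structure (which boundary tori and which slopes correspond under the isomorphism); without that step the manifold identification, and hence the claim that the capped-off Seifert surface is the unique JSJ torus, is not established.
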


\begin{remark}
The JSJ decomposition of $M_{-3, 5, 5}(0)$ and $M_{3, -5, -5}(0)$ suggests that there is a separating twice-punctured torus with slope 0. This becomes the torus separating the trefoil knot complement from $S^1 \times S_{0, 3}$, while the Seifert surface caps off to the essential torus bounding the two other torus boundary components of $S^1 \times S_{0, 3}$. In \cite{twicepunctured}, it is shown that the union of the Seifert-surface and the twice-punctured torus is detected by an ideal point, where the limiting character at that ideal point restricts to the holonomy representation of the thrice-punctured sphere and the $(2, 3, \infty)$ triangle orbifold underlying the trefoil knot complement. 
\end{remark}


The following lemma establishes all the necessary comptuational facts about the limiting character of an ideal point on $X(M_{-3, 3, 2n+1})$ detecting the Seifert surface. This also serves as an explicit demonstration of the proof of Theorem \ref{thm:arc} which generates arcs of $SL_2(\mathbb{R})$-representations coming from ideal points on character varieties.

\begin{lemma}\label{lma:limiting}
Let $\rho_n: F_2 \to SL_2(\mathbb{R})$ be the representation 
\begin{equation}
	\rho_n(a) = \begin{pmatrix}-1 & 1 \\ 0 & -1\end{pmatrix} \ \ \ \ \ \rho_n(b) = \begin{pmatrix}2n+1 & n \\ 2 & 1\end{pmatrix}
\end{equation}
with $\chi_n = (-2, 2n+2, -2n)$ its character. Let 
\begin{equation}
	m_1 = a^{n+1}bab \ \ \ \ \ m_2 = a^{n+1}ba \ \ \ \ \ \ell_1 = b^{-1}ab \ \ \ \ \ \ell_2 = b^{-1}aba
\end{equation}
and define a curve
\begin{equation}
	C_n = \{\text{tr}(m_1) = \text{tr}(m_2), \text{tr}(\ell_1) = \text{tr}(\ell_2), \text{tr}(m_1\ell_1) = \text{tr}(m_2\ell_2)\} \subset \mathbb{C}^3
\end{equation}
Then the following is true.
\begin{enumerate}
	\item $\rho_n(m_1)$ and $\rho_n(m_2)$ are conjugate parabolic matrices.
	\item $\chi_n \in C_n$. 
	\item $\chi_n$ is a smooth point on $C_n$.
	\item $\text{tr}(m_1)$ is a local coordinate on $C_n$.
	\item The trace of the longitude is non-constant at $\chi_n$.
	\item $\chi_n$ corresponds to an ideal point $X(M_{-3, 3, 2n+1})$ detecting the Seifert surface, and this ideal point lies on a component of $X(M)$ on which the trace of the longitude is nonconstant.
\end{enumerate}
\end{lemma}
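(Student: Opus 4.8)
The plan is to read items (1)--(5) as a sequence of explicit verifications---first at the level of the matrices $\rho_n(m_i),\rho_n(\ell_i)$, then at the level of the trace polynomials cutting out $C_n$---and to obtain item (6) formally by feeding these verifications into Theorem~\ref{thm:genusone} (with items (1), (3), (4) held in reserve for the later application of Theorem~\ref{thm:arc}). The single fact that makes every computation tractable is that $\rho_n(a)$ is parabolic with trace $-2$: a one-line induction gives $\rho_n(a)^k=(-1)^k\bigl(\begin{smallmatrix}1&-k\\0&1\end{smallmatrix}\bigr)$, so each of $m_1=a^{n+1}bab$, $m_2=a^{n+1}ba$, $\ell_1=b^{-1}ab$, $\ell_2=b^{-1}aba$ has image whose entries are \emph{low-degree} polynomials in $n$. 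I would compute these four matrices once and reuse them throughout.

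With the matrices in hand, items (1) and (2) are immediate. For (1) I would check that $\rho_n(m_1)$ and $\rho_n(m_2)$ each have trace $\pm2$ and are not $\pm I$, hence are parabolic, then compute $\Delta(\rho_n(m_1))$ and $\Delta(\rho_n(m_2))$ and verify they share a sign; Lemma~\ref{lma:offdiag} gives conjugacy in $SL_2(\mathbb{R})$. For (2) I would expand $\text{tr}(m_1),\text{tr}(m_2),\text{tr}(\ell_1),\text{tr}(\ell_2),\text{tr}(m_1\ell_1),\text{tr}(m_2\ell_2)$ as polynomials in $(x,y,z)=(\text{tr}\,a,\text{tr}\,b,\text{tr}\,ab)$ via the fundamental trace identity, evaluate at $\chi_n=(-2,2n+2,-2n)$, and confirm the three defining equations of $C_n$ hold. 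Along the way I would record that $\rho_n([m_1,\ell_1])=I$: this confirms that $\rho_n$ descends through the quotient $\pi_1(M_{-3,3,2n+1}\setminus S)\twoheadrightarrow\pi_1(M')$ and pins down the base value $\text{tr}([m_1,\ell_1])=2$ relevant to item (5).

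The technical heart is items (3), (4), (5), all governed by a single Jacobian. Writing $f_1=\text{tr}(m_1)-\text{tr}(m_2)$, $f_2=\text{tr}(\ell_1)-\text{tr}(\ell_2)$, $f_3=\text{tr}(m_1\ell_1)-\text{tr}(m_2\ell_2)$ as polynomials in $(x,y,z)$, I would compute $\partial(f_1,f_2,f_3)/\partial(x,y,z)$ at $\chi_n$ and show it has rank $2$; this simultaneously proves $C_n$ is a smooth curve at $\chi_n$ (item (3)) and exhibits the $1$-dimensional tangent space as the kernel of the Jacobian, spanned by a vector $v$. Item (4) reduces to $\langle\nabla\,\text{tr}(m_1),v\rangle\neq0$ and item (5) to $\langle\nabla\,\text{tr}([m_1,\ell_1]),v\rangle\neq0$. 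The main obstacle is \emph{uniformity in $n$}: the polynomials $f_i$ and $\text{tr}([m_1,\ell_1])$ have $n$-dependent degree, so their partial derivatives at $\chi_n$ are themselves functions of $n$, and I must establish the rank and the two non-orthogonality conditions for all large $n$. I expect the parabolic normal form for $\rho_n(a)^{n+1}$ to keep these derivatives polynomial of bounded degree in $n$, so that each nonvanishing condition becomes a single polynomial inequality in $n$ with only finitely many potential exceptions, which I would clear by a direct Sage-assisted check of the small cases, in the computational style used elsewhere in the paper.

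Finally, item (6) is an application of Theorem~\ref{thm:genusone}. Having identified (via Sekino's theorem) the unique JSJ component $M'$ of $M_{-3,3,2n+1}(0)$ as the Seifert-fibered $(2,4)$ torus link exterior---in particular not a hyperbolic manifold without real places---and $\chi_n$ as the trace of its real holonomy, items (2) and (5) supply the remaining hypotheses: $\chi_n$ lies in the image-closure $C_n$ and the longitude trace is non-constant near it. Theorem~\ref{thm:genusone} then produces an ideal point of $X(M_{-3,3,2n+1})$ detecting the Seifert surface, on a component with non-constant longitude trace and limiting character $\chi_n$, which is precisely item (6). Items (1), (3) and (4)---parabolic conjugacy, smoothness, and a local coordinate---are the additional data that Theorem~\ref{thm:arc} requires to build the arc of $SL_2(\mathbb{R})$-representations in the subsequent proof of Corollary~\ref{thm:pretzel}.
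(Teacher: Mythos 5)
Your plan for items (1)--(4) is essentially the paper's own proof: compute the four matrices explicitly (the paper conjugates $\rho_n(m_1), \rho_n(m_2)$ into upper-triangular form rather than invoking $\Delta$ directly, which is cosmetic), verify the three trace equations for (2), exhibit a rank-2 Jacobian for (3), and test the gradient of the meridian trace against the kernel of the Jacobian for (4); the $n$-dependence does come out as closed-form polynomial entries, exactly as you anticipate. The step that fails is item (5). You reduce it to the first-order condition $\langle\nabla\,\text{tr}([m_1,\ell_1]),v\rangle\neq0$, but this inner product vanishes identically, for every $n$. Write $\text{tr}([m_1,\ell_1])=\kappa(p,q,r)$ with $\kappa(p,q,r)=p^2+q^2+r^2-pqr-2$ and $(p,q,r)=(\text{tr}(m_1),\text{tr}(\ell_1),\text{tr}(m_1\ell_1))$ viewed as polynomial functions on $\mathbb{C}^3$. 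The chain rule gives
\begin{equation}
	\nabla\,\text{tr}([m_1,\ell_1]) = (2p-qr)\,\nabla p + (2q-pr)\,\nabla q + (2r-pq)\,\nabla r .
\end{equation}
At $\chi_n$, your own observation $\rho_n([m_1,\ell_1])=I$ forces $(p,q,r)=((-1)^{n+1}2,\,-2,\,(-1)^{n}2)$, and all three coefficients $2p-qr$, $2q-pr$, $2r-pq$ vanish there: $\chi_n$ maps to a \emph{singular point} of the reducible surface $\kappa=2$. Hence $\nabla\,\text{tr}([m_1,\ell_1])|_{\chi_n}=0$ and your test reads $0\neq0$; the degeneracy is structural, not accidental, so no Sage check of small cases can repair it. This is exactly why the paper goes to second order: it computes the Hessian of $\text{tr}([m_1,\ell_1])$ at $\chi_n$, which equals $8\,u u^{T}$ for $u=(\tfrac{n(n+1)(2n+1)}{3},\,n,\,n+1)$, and checks that the resulting quadratic form is nonzero on the kernel vector $v$ (one gets $8(u\cdot v)^2=8(6n+3)^2\neq0$). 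Second-order non-degeneracy along the tangent direction is the correct substitute for your first-order condition.

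Item (6) also has a gap as you have set it up. Theorem \ref{thm:genusone} requires the holonomy character to lie in $V$, the Zariski closure of the image of the restriction map $r\colon X(M)\to X(M\setminus S)$, whereas item (2) only places $\chi_n$ on $C_n$, the curve cut out by the three trace equations; a priori $V$ could be a proper subvariety of $C_n$ near $\chi_n$, and nothing in (1)--(5) says that points of $C_n$ actually come from representations of $\pi_1(M_{-3,3,2n+1})$. (Your application also quietly assumes that $\chi_n$ is the trace of a real holonomy representation of the $(2,4)$ torus-link JSJ piece; this is true but is a separate verification, which the paper only records in a remark after the lemma.) The paper closes this gap with a gluing argument your proposal omits: at nearby points of $C_n$ the longitude trace is $\neq 2$ (this is where (5) gets used), so the restrictions to $\langle m_1,\ell_1\rangle$ and $\langle m_2,\ell_2\rangle$ are irreducible with equal characters, hence $SL_2(\mathbb{C})$-conjugate, hence glue to genuine representations of the HNN extension $\pi_1(M_{-3,3,2n+1})$; this puts a punctured neighborhood of $\chi_n$ in $\mathrm{im}(r)$. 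Since $\chi_n$ itself does not glue (the restricted representations at $\chi_n$ are non-conjugate, as witnessed by $\rho_n(\ell_1)$ and $\rho_n(\ell_2)$), characters approaching $\chi_n$ cannot converge in $X(M)$ and must limit to an ideal point, which then detects $S$ with nonconstant longitude trace. Either run this argument, or first prove that $C_n$ coincides with $V$ near $\chi_n$ before invoking Theorem \ref{thm:genusone}.
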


\begin{proof}
A computation yields
\begin{equation}
	\rho_n(m_1) = (-1)^n\begin{pmatrix}-2n-1 & -n \\ 4n & 2n-1\end{pmatrix} \ \ \ \ \ \rho_n(m_2) = (-1)^n\begin{pmatrix}-1 & 0 \\ 2 & -1\end{pmatrix}
\end{equation}
Let 
\begin{equation}
	P_1 = \begin{pmatrix}1 & 0 \\ 2 & 1 \end{pmatrix} \ \ \ \ \ P_2 = \begin{pmatrix} 1 & -1 \\ 1 & 0 \end{pmatrix}
\end{equation}
\begin{equation}
	P_1\rho_n(m_1)P_1^{-1} = (-1)^n\begin{pmatrix}-1 & -n \\ 0 & -1\end{pmatrix} \ \ \ \ \ P_2\rho_n(m_2)P_2^{-1} = (-1)^n\begin{pmatrix}-1 & -2 \\ 0 & -1\end{pmatrix} 
\end{equation}
These matrices will always have the same sign of the upper-right entry, proving \textbf{(1)}. We also have 
\begin{equation}
	\rho_n(\ell_1) = \begin{pmatrix}1 & 1 \\ -4 & -3\end{pmatrix} \ \ \ \ \ \rho_n(\ell_2) = \begin{pmatrix}-1 & 0 \\ 4 & -1\end{pmatrix}
\end{equation}
Notice that $\rho_n(\ell_1)$ and $\rho_n(\ell_2)$ aren't conjugate. Finally,
\begin{equation}
	\rho_n(m_1\ell_1) = (-1)^n\begin{pmatrix}2n-1 & n-1 \\ -4n & 3-2n\end{pmatrix} \ \ \ \ \ \rho_n(m_2\ell_2) = (-1)^n\begin{pmatrix}1 & 0 \\ -6 & 1\end{pmatrix}
\end{equation}
Then $\rho_n$ satisfies the equations $\text{tr}(m_1) = \text{tr}(m_2), \text{tr}(\ell_1) = \text{tr}(\ell_2), \text{tr}(m_1\ell_1) = \text{tr}(m_2\ell_2)$, meaning that $\chi_n = (-2, 2n+2, -2n) \in C_n$. The Jacobian at $\chi_n$ is given by 
\begin{equation}
	\begin{pmatrix}(-1)^{n+1}\frac{4n^4 + 4n^3 - 7n^2 - 4n}{3} & (-1)^{n+1}(2n^2 - n - 1) & (-1)^{n+1}(2n^2 + n - 2) \\ (2n+1)^2 & 4 & 4 \\ (-1)^{n+1}\frac{-4n^4 + 4n^3 + 43n^2 + 26n + 3}{3} & (-1)^{n+1}(-2n^2 + 5n + 9) & (-1)^{n+1}(-2n^2 + 3n + 14) \end{pmatrix}
\end{equation}
The determinant of this matrix is zero, and the top left $2 \times 2$ minor is nonzero, so the rank is 2. This confirms that $\chi_n$ is a smooth point on $C_n$, proving \textbf{(3)}. In order to show that $\text{tr}(m_2)$, we show that the gradient of $\text{tr}(m_2)$ is not in the span of the gradient. We compute that the gradient is 
\begin{equation}
	\nabla \text{tr}(m_2)|_{\chi_n} = (-1)^{n+1}\left(\frac{2n(n+1)(n+2)}{3}, n+1, n+2\right)
\end{equation}
which can be seen not to be in the span of the Jacobian of $C_n$ at $\chi_n$. Thus, $\text{tr}(m_2)$ is a local coordinate at $\chi_n$, proving \textbf{(4)}. An integral basis vector for the kernel of the Jacobian is $(12, -(4n^3 + 12n^2 + 17n + 6), 4n^3 + 5n + 3)$. The Hessian of $\text{tr}([m_1, \ell_1])$ at $(-2, 2n+2, -2n)$ is given by 
\begin{equation}
	\begin{pmatrix}\frac{8n^2(n+1)^2(2n+1)^2}{9} & \frac{8n^2(n+1)(2n+1)}{3} & \frac{8n(n+1)^2(2n+1)}{3} \\ \frac{8n^2(n+1)(2n+1)}{3} & 8n^2 & 8n(n+1) \\ \frac{8n(n+1)^2(2n+1)}{3} & 8n(n+1) & 8(n+1)^2\end{pmatrix}
\end{equation}
Since this is nonzero on the tangent plane, it follows that the trace of $[m_1, \ell_1]$ is nonconstant at $\chi_n$, proving \textbf{(5)}. Now, since we already established that $\rho_n(\ell_1)$ and $\rho_n(\ell_2)$ are nonconjugate, it follows that $\rho_n|_{\langle m_1, \ell_1 \rangle}$ and $\rho_n|_{\langle m_2, \ell_2 \rangle}$ are nonconjugate representations in $SL_2(\mathbb{R})$, despite having the same character. Since $\text{tr}([m_1, \ell_1])$ is not constantly equal to 2 near $\chi_n$ in a small neighborhood of $\chi_n$ on $C_n$, $\rho_n|_{\langle m_i, \ell_i \rangle}$ for $i = 1, 2$ are irreducible with the same character. So these two restricted representations are conjugate in $SL_2(\mathbb{C})$, and they glue together to form the trace of a legitimate representation on the HNN extension gluing together $\langle m_1, \ell_1 \rangle$ to $\langle m_2, \ell_2 \rangle$, which is the fundamental group of $M_{-3, 3, 2n+1}$. Since $\chi_n$ cannot glue to form a representation, the nearby characters do not converge to a legitimate representation of $\pi_1(M_{-3, 3, 2n+1})$. Thus, they must converge to an ideal point, so $\chi_n$ corresponds to an ideal point on the character variety of $M_{-3, 3, 2n+1}$, and since nearby deformations have nonconstant longitude trace, this proves \textbf{(6)}. 
\end{proof}

\begin{remark}
From the results of \cite{sekino}, we know that the JSJ component of 0-surgery on a $(-3, 3, 2n+1)$ pretzel knot is the $(2, 4)$ torus link. In this case, $\rho_n$ is a holonomy representation of the base orbifold $A^2(2)$ of the $(2, 4)$ torus link. 
\end{remark}

Then using Corollary \ref{cor:general}, we have the following explicit result.

\begin{corollary}
The holonomy extension loci of the $(-3, 3, 2n+1)$ pretzel knot complements admit a non-horizontal arc which has a horizontal asymptote at the $x$-axis. Consequently, the $(-3, 3, 2n+1)$ pretzel knot complements are half-orderable near 0.
\end{corollary}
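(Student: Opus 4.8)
The plan is to verify that the $(-3,3,2n+1)$ pretzel knot complements satisfy every hypothesis of Corollary~\ref{cor:general}, at which point the conclusion follows immediately. The corollary requires: (i) a nonfiber free genus one Seifert surface $S$ in an integer homology sphere with $M(0)$ having a single JSJ torus; (ii) the JSJ component $M'$ is not hyperbolic-with-no-real-place; (iii) the trace $\chi$ of a real holonomy of $M'$ is a smooth point of the Zariski-closure of $\mathrm{im}(r)$; (iv) a word $w \in \pi_1(T_1)$ with $\Delta(\rho(w)) = \Delta(\varphi(\rho(w)))$ such that $\mathrm{tr}(w)$ is a local coordinate; and (v) the longitude trace is not constantly $2$ near $\chi$. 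Lemma~\ref{lma:limiting} has been engineered precisely to deliver (iii), (iv), (v), and the detection statement, so the real work is to connect that lemma to the hypotheses of the corollary and then cite the theorem.

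First I would identify the explicit data. By Sekino's theorem, $M_{-3,3,2n+1}(0)$ has a single JSJ component, the $(2,4)$ torus link exterior, which is Seifert-fibered over the annulus $A^2(2)$; this is a Seifert-fibered piece over a hyperbolic $2$-orbifold, hence has a real holonomy representation and is \emph{not} hyperbolic-with-no-real-place, settling (i) and (ii). The remark following Lemma~\ref{lma:limiting} identifies $\rho_n$ as exactly a holonomy representation of the base orbifold $A^2(2)$, so $\chi_n$ is the trace of a real holonomy of $M'$ as required. I would then read off from the six parts of Lemma~\ref{lma:limiting}: part (3) gives smoothness (iii); part (4) gives that $\mathrm{tr}(m_1)$ is a local coordinate, and part (1) gives that $\rho_n(m_1),\rho_n(m_2)$ are conjugate parabolics, which by Lemma~\ref{lma:offdiag} is precisely the sign condition $\Delta(\rho(w))=\Delta(\varphi(\rho(w)))$ with $w = m_1$, settling (iv); and part (5) gives that the longitude trace is nonconstant, hence not identically $2$, settling (v).

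With all hypotheses verified, I would invoke Corollary~\ref{cor:general} directly to conclude that $H_{0,0}(M_{-3,3,2n+1})$ admits a non-horizontal arc with a horizontal asymptote at the $x$-axis, and that $M_{-3,3,2n+1}$ is half-orderable near $0$. Since this holds for every integer $n$, we obtain the infinite family asserted in the statement (and in Corollary~\ref{thm:pretzel}).

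The main obstacle, honestly, is not in this final assembly step but in ensuring that the \emph{bookkeeping} matches the corollary's abstract setup. In particular I would want to double-check that the words $m_1,m_2,\ell_1,\ell_2$ defined in Lemma~\ref{lma:limiting} genuinely correspond to the two once-punctured-torus subgroups $\langle m_1,\ell_1\rangle$, $\langle m_2,\ell_2\rangle$ of $\pi_1(M\setminus S)$ in the Lin presentation, so that the gluing map $\varphi$ sending $m_1\mapsto m_2$, $\ell_1\mapsto\ell_2$ is the correct one and $[m_1,\ell_1]$ really is the homological longitude. Granting that identification (which is where the free genus one structure of odd pretzel knots is used), every analytic and sign-theoretic difficulty has already been absorbed into Theorem~\ref{thm:arc} and Corollary~\ref{cor:general}; the present corollary is then a clean application.
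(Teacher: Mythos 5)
Your proposal is correct and follows essentially the same route as the paper: the paper derives this corollary by invoking Corollary~\ref{cor:general}, with hypotheses (iii)--(v) and the sign condition supplied by Lemma~\ref{lma:limiting}, the identification of $\rho_n$ as a real holonomy of the $(2,4)$ torus link's base orbifold supplied by the remark, and the JSJ/nonfibered/free-genus-one facts supplied by Sekino's theorem. Your hypothesis-by-hypothesis bookkeeping (including the check that $m_1, m_2, \ell_1, \ell_2$ match the once-punctured-torus subgroups in the Lin presentation) is exactly the implicit content of the paper's one-line deduction.
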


The orderability near 0 of the $(-3, 3, 2n+1)$ pretzel knot complements does not follow directly follow from any previous results; the results of \cite{cullerdunfield} require the knot to be fibered or for the trace field of the knot complement to have real places. By the results of \cite{sekino}, none of these pretzel knots are fibered, and the first few $(-3, 3, 2n+1)$ knot complements can be shown using SnapPy and Sage to admit no real places in their trace field. The results of \cite{gao} require longitudinal rigidity, which according to SnapPy and Sage, is not satisfied by any $(-3, 3, 2n+1)$ pretzel knot for $n$ small. (However, the Alexander polynomial condition for Gao's Theorem 5.1 does seem to be satisfied by these pretzel knots, making longitudinal rigidity the only obstruction.) The results of \cite{khantran} demonstrate orderability near 0 for $(2p+1, 2q+1, 2r+1)$ pretzel knots with $p, q, r > 0$. Thus, the $(-3, 3, 2n+1)$ provide new explicit additions to the list of knot surgeries which are proven to be left-orderable.

\section{Future directions}\label{sec:future}

In this section we discuss various methods to improve the applicability and scope of Theorem \ref{thm:main}.

\subsection{Other odd pretzel knots}

We know, from the work of \cite{sekino}, most other odd pretzel knots yield 2-cusped hyperbolic 3-manifolds when taking the JSJ component of the 0-surgery. In order to establish arcs of real points coming from ideal points detecting their Seifert surfaces, we must begin with a real conjugate of the hyperbolic holonomy representation of this hyperbolic JSJ component. The author used SnapPy, Sage, and Regina to make the following computation for some other pretzel knots.

\begin{corollary}
Let $M_{p, q, r}$ be one of the following pretzel knot complements:
\begin{itemize}
	\item $(3, 3, 5), (3, 5, 5), (3, 3, 7), (3, 5, 7)$
	\item $(-5, 5, 5), (-3, 5, 9), (-3, 5, 13)$
\end{itemize} 
Then $M_{p, q, r}$ the resulting hyperbolic 3-manifold in the JSJ decomposition has a real place, and the Zariski-closure of the restriction map $r: X(M_{p, q, r}) \to \mathbb{C}^3$ is smooth. Then $H_{0, 0}(M_{p, q, r})$ admits a non-horizontal arc which has a horizontal asymptote at the $x$-axis, and $M_{p, q, r}$ is half-orderable near 0.
\end{corollary}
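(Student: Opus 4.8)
The plan is to verify, knot by knot, the hypotheses of Corollary~\ref{cor:general}; once these hold, the two conclusions (the arc in $H_{0,0}$ and half-orderability near $0$) follow verbatim. The topological hypotheses come for free from the theorem of \cite{sekino}: each listed $(2p+1,2q+1,2r+1)$ pretzel knot is a knot in $S^3$ of free Seifert genus one, so $M_{p,q,r}\setminus S$ is a genus-two handlebody with $\pi_1 = \langle a,b\rangle$, and each listed triple falls under the generic case in which $M_{p,q,r}(0)$ has a single JSJ torus $\hat S$ (the capped-off Seifert surface) whose complementary region $M'$ is a two-cusped hyperbolic $3$-manifold. Thus the assumptions that $S$ is a nonfiber free genus one Seifert surface and that $M(0)$ has only one JSJ torus are immediate, and the homology conditions hold since $M_{p,q,r}$ is a knot complement in $S^3$.

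First I would, for each knot, use SnapPy to identify $M'$ and compute its trace field $K$, and then verify with Sage that $K$ admits a real place at which the holonomy representation conjugates into $SL_2(\mathbb{R})$ — this is the first assertion of the statement and is exactly the condition that a \emph{real holonomy representation} of $M'$ exists. The finite list is dictated by this step: for a generic odd pretzel knot the hyperbolic JSJ component has no real place and the construction does not even begin. Having certified a split real place, I extract the corresponding real Galois conjugate $\rho\colon \pi_1(M')\to SL_2(\mathbb{R})$, precompose with the quotient $\pi_1(M\setminus S)\twoheadrightarrow\pi_1(M')$ through the Lin presentation, and record its character $\chi\in\mathbb{C}^3$ in the coordinates $(\text{tr}(a),\text{tr}(b),\text{tr}(ab))$.

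The remaining hypotheses are verified by the same computations carried out in Lemma~\ref{lma:limiting}. Writing the gluing words $m_1,\ell_1,m_2,\ell_2$ from the Lin presentation, I form the curve $C = \{\text{tr}(m_1)=\text{tr}(m_2),\ \text{tr}(\ell_1)=\text{tr}(\ell_2),\ \text{tr}(m_1\ell_1)=\text{tr}(m_2\ell_2)\}\subset\mathbb{C}^3$, the Zariski-closure of the image of $r$; check $\chi\in C$; compute the Jacobian of the three defining polynomials at $\chi$ and confirm it has rank $2$, establishing the second assertion that $C$ is smooth at $\chi$. I then search for a word $w$ in $m_1,\ell_1$ such that $\rho(w)$ is elliptic or parabolic with $\Delta(\rho(w))$ and $\Delta(\varphi(\rho(w)))$ of the same sign (so that, by Lemma~\ref{lma:offdiag}, $\rho(w)$ and $\rho(\varphi(w))$ are conjugate in $SL_2(\mathbb{R})$ rather than by a determinant $-1$ matrix) and such that $\nabla\,\text{tr}(w)$ lies outside the row span of the Jacobian, making $\text{tr}(w)$ a local coordinate. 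Finally I evaluate the Hessian of $\text{tr}([m_1,\ell_1])$ on the tangent line $\ker(\mathrm{Jac})$ at $\chi$, exactly as in Lemma~\ref{lma:limiting}, to confirm the longitude trace is nonconstant and not identically $2$ near $\chi$. With every hypothesis in place, Corollary~\ref{cor:general} yields the conclusion.

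The main obstacle is the real-holonomy step. For the $(-3,3,2n+1)$ family, $M'$ was Seifert fibered over $A^2(2)$ and its real holonomy was a uniform, explicitly writable elliptic or parabolic representation (Lemma~\ref{lma:limiting}); here $M'$ is hyperbolic, so the needed $SL_2(\mathbb{R})$ representation comes from a real Galois conjugate whose very existence depends on the arithmetic of the trace field and whose entries are algebraic numbers produced only numerically. Two points require care: certifying that a numerically real embedding is a genuine split real place with $\rho$ landing in $SL_2(\mathbb{R})$ (which demands exact computation over $K$ rather than floating-point approximation), and exhibiting the word $w$ whose off-diagonal signs agree under $\varphi$ while also serving as a local coordinate — nothing guarantees a single $w$ does both, so this is located by search. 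Because neither step admits a uniform closed form across the family, the verification is necessarily done case by case with SnapPy, Sage, and Regina, which is why the statement is phrased for an explicit finite list rather than an infinite family.
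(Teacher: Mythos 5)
Your proposal takes essentially the same route as the paper: the paper treats this corollary as a case-by-case computation with SnapPy, Sage, and Regina that certifies a split real place for the hyperbolic JSJ component of the $0$-surgery and smoothness of the image of $r$, and then feeds these facts into Corollary \ref{cor:general}, with Lemma \ref{lma:limiting} serving as the template for the remaining checks (the word $w$ with matching $\Delta$-signs serving as a local coordinate, and the nonconstancy of the longitude trace). Your write-up is, if anything, more explicit than the paper about exactly which hypotheses of Corollary \ref{cor:general} must be verified and where the exact-arithmetic care is needed, but the argument is the same.
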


We make the following conjecture. 

\begin{conjecture}
Let $M$ be any odd pretzel knot complement. If the JSJ component of $M(0)$ is not a hyperbolic 3-manifold whose trace field has no real places, then $H_{0, 0}(M)$ admits a non-horizontal arc which has a horizontal asymptote at the $x$-axis, and $M$ is half-orderable near 0.
\end{conjecture}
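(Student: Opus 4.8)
The plan is to prove the conjecture by verifying the hypotheses of Corollary \ref{cor:general} (or, where those fail, a suitable variant of Theorem \ref{thm:arc}) for every odd pretzel knot complement $M = M_{2p+1,\,2q+1,\,2r+1}$, organizing the argument according to the trichotomy of JSJ types supplied by Sekino's theorem \cite{sekino}. Since odd pretzel knots have free Seifert genus one, the free Seifert surface $S$ and the handlebody $M \setminus S$ are always of the required form, and the Lin presentation gives a uniform description of $\pi_1(M)$ as an HNN extension of $\langle a, b \rangle$ along two rank-two free subgroups $\langle m_1, \ell_1 \rangle$ and $\langle m_2, \ell_2 \rangle$, with the gluing map $\varphi$ and the words $m_i, \ell_i$ expressible as explicit words in $a, b$ depending on the parameters $p, q, r$. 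The three cases to treat are: the Seifert-fibered family $(-3, 3, 2n+1)$; the two special cases $M_{-3,5,5}$ and $M_{3,-5,-5}$ whose $0$-surgery has two JSJ components; and the generic hyperbolic case.

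The Seifert-fibered family is already handled by Lemma \ref{lma:limiting} together with Corollary \ref{cor:general}, so nothing further is needed there. For the two exceptional cases $M_{-3,5,5}$ and $M_{3,-5,-5}$, Corollary \ref{cor:general} does not apply directly because $M(0)$ has two JSJ tori rather than one; here I would instead use the twice-punctured-torus detection of \cite{twicepunctured}, where the union of the Seifert surface with the separating twice-punctured torus is shown to be detected by an ideal point whose limiting character restricts to the holonomy of the thrice-punctured sphere $S^1 \times S_{0,3}$ and of the $(2,3,\infty)$ triangle orbifold underlying the trefoil complement. Both of these base characters are real, so I would prove a two-torus analogue of Theorem \ref{thm:arc}: glue real holonomy representations across both tori, applying Lemma \ref{lma:offdiag} twice to control the determinants of the two conjugating matrices, and then deform to an arc of $SL_2(\mathbb{R})$-representations limiting to the ideal point. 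Since the detected surface again has boundary slope $0$ and the longitude lifts to translation number $0$, Theorem \ref{thm:main} yields half-orderability near $0$.

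The bulk of the argument is the generic hyperbolic case, where $M(0)$ has a single hyperbolic JSJ component $M'$ with two torus boundary components. By hypothesis the trace field of $M'$ has a real place, so there is a real Galois conjugate $\rho$ of its holonomy landing in $SL_2(\mathbb{R})$, and its character $\chi$ is the candidate limiting character. I would then verify the three remaining hypotheses of Corollary \ref{cor:general} \emph{uniformly} in $(p, q, r)$, mirroring Lemma \ref{lma:limiting}: (i) smoothness of $\chi$ on the curve $C = \overline{\text{im}\,r} \subset \mathbb{C}^3$, by exhibiting a nonvanishing $2 \times 2$ minor of the Jacobian of the defining equations $\text{tr}(m_1) = \text{tr}(m_2)$, $\text{tr}(\ell_1) = \text{tr}(\ell_2)$, $\text{tr}(m_1 \ell_1) = \text{tr}(m_2 \ell_2)$; (ii) existence of a word $w$ with $\Delta(\rho(w)) = \Delta(\varphi(\rho(w)))$ whose trace is a local coordinate, by checking that $\nabla \text{tr}(w)$ lies outside the span of the Jacobian rows; and (iii) nonconstancy of the longitude trace, by showing that the Hessian of $\text{tr}([m_1,\ell_1])$ is nonzero on the tangent line. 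The new difficulty relative to Lemma \ref{lma:limiting} is that $\chi$ is now an algebraic point with no closed-form expression, so these polynomial conditions must be checked symbolically in the traces $x = \text{tr}(a)$, $y = \text{tr}(b)$, $z = \text{tr}(ab)$ using only the relations cut out by the Lin presentation, rather than explicit matrix entries.

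The hard part will be exactly this uniformity. For a fixed finite list one may simply run SnapPy, Sage, and Regina, as in the preceding corollary; proving the full conjecture requires controlling an infinite family in which the holonomy character varies algebraically and is not given by a formula like $\chi_n = (-2, 2n+2, -2n)$. Two obstacles stand out. First, guaranteeing a real place in the trace field of $M'$ is a genuinely number-theoretic condition on $(p,q,r)$, which is precisely why it appears as a hypothesis rather than a conclusion; an unconditional statement would require an arithmetic analysis of how the gluing determines the signature of the trace field. Second, even granting a real place, conditions (i)--(ii) must be verified without explicit matrices, and the most promising route seems to be to express the defining equations and their derivatives as polynomials in $x, y, z$ with coefficients depending on $p, q, r$, reduce modulo the ideal defining the holonomy character, and argue that the relevant minors and gradients are nonzero in the resulting coordinate ring. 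Showing that these symbolic expressions cannot vanish simultaneously for \emph{any} admissible parameter triple is the crux, and is where a uniform proof is most likely to demand a new idea beyond the case-by-case computations used so far.
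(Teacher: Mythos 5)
This statement is one of the open conjectures in Section \ref{sec:future}; the paper itself offers no proof of it, and your proposal does not constitute one either --- it is a research program whose essential steps remain open, as you yourself concede in the final paragraph. Your strategy is exactly the one the paper envisions (run the trichotomy of \cite{sekino} and verify the hypotheses of Corollary \ref{cor:general}, or a variant of Theorem \ref{thm:arc}, in each case), so the issue is not the approach but the fact that the two hard cases are left unresolved. In the generic hyperbolic case, you must verify smoothness of the limiting character on $C$, the existence of a word $w$ with matching $\Delta$-signs whose trace is a local coordinate, and nonconstancy of the longitude trace, \emph{uniformly} over an infinite family in which the real holonomy character is a non-explicit algebraic point. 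This is precisely the content of the paper's separate genericity conjecture in the subsection ``Finding arcs,'' so your argument reduces the stated conjecture to another conjecture of the same paper rather than proving it; a symbolic reduction modulo the ideal cut out by the Lin presentation is a plausible tactic, but you give no mechanism for ruling out the simultaneous vanishing of the relevant minors for some admissible $(p,q,r)$, and that is the whole difficulty.

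The exceptional cases $M_{-3,5,5}$ and $M_{3,-5,-5}$ contain a second genuine gap: Corollary \ref{cor:general} and Theorem \ref{thm:arc} are stated only for a single JSJ torus, and the ``two-torus analogue'' you invoke does not exist in the paper and is only sketched by you. With two gluing tori the fundamental group is no longer a single HNN extension of $F_2$; one must simultaneously force \emph{two} conjugating matrices to lie in $SL_2(\mathbb{R})$, and Lemma \ref{lma:offdiag} only controls the determinant sign of a conjugator when the relevant words remain elliptic or parabolic along the entire deformation. Nothing in your outline guarantees that suitable words with matching $\Delta$-signs exist for both tori at once, nor that both restricted representations stay irreducible (so that Goldman's dichotomy applies) throughout the arc. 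Until both of these gaps are closed --- the uniform genericity verification and the multi-torus gluing theorem --- the statement remains, as the paper presents it, a conjecture.
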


The author observed the following interesting patterns in his computational experiments on character varieties of odd pretzel knots.
\begin{itemize}
	\item The genus of the image of canonical component appears to be equal to $n$ for the $(-3, 3, 2n+1)$ pretzel knot. This, combined with the Riemann-Hurwitz theorem, provides a lower bound for the genus of the canonical component of such pretzel knots. This could lead to effective bounds on the genera of canonical components, which are largely unknown outside of two-bridge knots \cite{petersen} and once-punctured torus bundles of tunnel number one \cite{bakerpetersen}. 
	\item The degree of the trace field of JSJ component on 0-surgery on the $(-3, 5, 2n+1)$ knot appears to be equal to $n - 1$. If this is true in general, the JSJ components of the 0-surgeries on the $(-3, 5, 4n+1)$ knot complements would have odd degree trace field, meaning they have real places, and they would be another infinite family of odd pretzel knots which are likely to be half-orderable near 0.
\end{itemize}

\subsection{Finding arcs}

The extra hypotheses in Corollary \ref{cor:general} seem reasonably generic. We conjecture the following.  

\begin{conjecture}
Let $S$ be a surface in $M$ detected by an ideal point $x$ of the character variety, and let $C \subset X(M \setminus S)$ be the Zariski-closure of the restriction map $r: X(M) \to X(M \setminus S)$. Then the limiting character at $x$ is always a smooth point in $C$ where the trace of all but finitely many words in $\pi_1(M \setminus S)$ is a local coordinate.
\end{conjecture}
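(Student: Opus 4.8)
The plan is to separate the conjecture into its two assertions---smoothness of the limiting character $\chi_\infty$ as a point of $C$, and the genericity of trace functions as local coordinates there---and to treat them with different machinery. Throughout I would use two structural facts recalled in the excerpt: by Culler--Shalen theory $\chi_\infty$ is the character of an honest representation $\rho_\infty: \pi_1(M\setminus S) \to SL_2(\mathbb{C})$, and by Tillmann's criterion $\chi_\infty$ is exactly the finite limit of the restricted characters along a sequence approaching the ideal point $x$. The first reduction is to the case where $\rho_\infty$ is irreducible; reducible limiting characters lie on the lower-dimensional reducible stratum, where the character variety is typically singular, so I expect these to require separate treatment (or to force extra hypotheses) rather than to follow from a uniform argument.

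For smoothness my approach is deformation-theoretic. The governing principle is that an irreducible $\rho_\infty$ gives a smooth point of $X(M\setminus S)$ of the expected local dimension whenever the deformation obstructions in $H^2(\pi_1(M\setminus S); \mathrm{Ad}\,\rho_\infty)$ vanish, the tangent space then being computed by $H^1$. First I would exploit the group structure of $\pi_1(M\setminus S)$: in the handlebody cases of primary interest it is free of rank two, so $H^2$ vanishes automatically and $X(M\setminus S)$ is smooth at every irreducible character, while in general one would invoke half-lives-half-dies duality for the $3$-manifold-with-boundary $M\setminus S$. The genuinely delicate step is passing from smoothness of the ambient $X(M\setminus S)$ to smoothness of the subvariety $C = \overline{\mathrm{im}(r)}$: one must check that $C$ agrees with the scheme cut out by the trace equalities (of the form $\text{tr}(m_1)=\text{tr}(m_2)$, and so on) and that these meet transversally at $\chi_\infty$, which is precisely the nonvanishing-of-a-minor computation performed by hand in Lemma \ref{lma:limiting}. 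Promoting that case-by-case Jacobian verification to a structural statement is, I expect, the core of the smoothness half.

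For the genericity of local coordinates, the key reformulation is that at a smooth point a trace function $\text{tr}(w)$ fails to be a local coordinate exactly when $d(\text{tr}(w))$ vanishes on $T_{\chi_\infty}C$, equivalently when $\text{tr}(w) \in \mathbb{C} + \mathfrak{m}_{\chi_\infty}^2$ in the local ring $\mathcal{O}_{C,\chi_\infty}$. Since the ring of trace functions generates the coordinate ring of the character variety, the differentials $\{d(\text{tr}(w))\}$ span the cotangent space, so some trace function is always a coordinate; the content of the conjecture is that only finitely many fail. Here I would first reinterpret ``words'' as distinct trace functions, because $\text{tr}(w)=\text{tr}(w^{-1})=\text{tr}(gwg^{-1})$ makes the failure set closed under inversion and conjugation, so a literal count over all of $\pi_1(M\setminus S)$ is never finite. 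With that reading I would analyze the linear functional $w \mapsto d(\text{tr}(w))|_{T_{\chi_\infty}C}$ and attempt to bound its zero locus among distinct trace polynomials using the Cayley--Hamilton recursions that express all traces in $F_2$ in terms of $\text{tr}(a),\text{tr}(b),\text{tr}(ab)$.

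The hard part will be making this finiteness precise, and I am not convinced it holds literally. There is no evident reason that only finitely many distinct trace polynomials acquire a critical point along $C$ at $\chi_\infty$, and the honest statement is more plausibly that the trace functions which fail to be coordinates form a proper Zariski-closed condition---hence are non-generic---rather than a literally finite set. Establishing even that requires understanding how the infinite family $\{\text{tr}(w)\}$ distributes across the linear and higher-order parts of $\mathcal{O}_{C,\chi_\infty}$. I would therefore prioritize the smoothness assertion in full generality, then prove the robust statement that a generic trace function is a local coordinate, and only afterward test whether genuine finiteness survives under the additional transversality hypotheses already appearing in Corollary \ref{cor:general}.
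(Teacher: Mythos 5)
First, a point of orientation: the paper does not prove this statement at all---it appears in the future-directions section as an open conjecture, immediately after the admission that its hypotheses merely ``seem reasonably generic''---so there is no proof of record to compare yours against, and your proposal must stand on its own. As a proof it does not close, and you say as much. For the smoothness half, your deformation-theoretic machinery ($H^2$-vanishing for free groups, half-lives-half-dies in general) controls only the ambient variety $X(M \setminus S)$, which in the free-group case is all of $\mathbb{C}^3$ and smooth for trivial reasons; the actual object is $C = \overline{\mathrm{im}(r)}$ at a point of $C \setminus \mathrm{im}(r)$. That is exactly where the difficulty lives: the limiting character is by construction a boundary point of the image of $r$, and deformation theory of representations of $\pi_1(M \setminus S)$ is blind to the constraint of arising as a limit of restrictions from $\pi_1(M)$. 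You correctly name this as the core of the problem, but you propose no mechanism for it, so the smoothness assertion remains entirely open in your write-up.

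Second, your suspicion about the finiteness clause is not only justified---it can be sharpened into an outright refutation of the literal statement, using the paper's own computations. Writing $(p, q, r) = (\mathrm{tr}(A), \mathrm{tr}(B), \mathrm{tr}(AB))$, one has $\mathrm{tr}([A,B]) = p^2 + q^2 + r^2 - pqr - 2$, whose gradient $(2p - qr,\, 2q - pr,\, 2r - pq)$ vanishes precisely when $p, q, r \in \{\pm 2\}$ with consistent signs. These are exactly the values $(\mp 2, -2, \pm 2)$ taken by $(\mathrm{tr}(m_1), \mathrm{tr}(\ell_1), \mathrm{tr}(m_1\ell_1))$ at the limiting character $\chi_n$ of Lemma \ref{lma:limiting}; this is the reason that lemma must resort to the \emph{Hessian} of $\mathrm{tr}([m_1, \ell_1])$ to prove assertion (5)---the differential of the longitude's trace vanishes identically at $\chi_n$, so the longitude is a nontrivial word whose trace is \emph{not} a local coordinate on $C_n$. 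It follows that every power $[m_1, \ell_1]^k$ fails as well, since $d\,\mathrm{tr}(w^k) = P_k'(\mathrm{tr}(w))\, d\,\mathrm{tr}(w)$ for the relevant Chebyshev-type polynomials $P_k$, and these yield infinitely many pairwise distinct trace functions on $C_n$ (distinct because $\mathrm{tr}([m_1, \ell_1])$ is nonconstant on $C_n$, which is assertion (5) itself); including conjugates $g[m_1,\ell_1]^k g^{-1}$ gives infinitely many distinct failing words outright. So under either reading---words or trace functions---the exceptional set is infinite in the paper's own flagship examples, and the conjecture as stated is false. Your proposed reformulation, that the failure locus is a proper (Zariski-closed, non-generic) condition with peripheral words excluded, is the statement one should actually pursue, and identifying that repair is the genuine contribution of your proposal.
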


These conditions all seem reasonably generic, and if proven, it would lead to a proof of the following general conjecture, with most of the extra technical conditions removed:

\begin{conjecture}
Suppose $M$ has a connected free genus one Seifert surface $S$ which is detected by an ideal point on a component of the character variety on which the trace of the longitude is nonconstant, and that $M(0)$ has only one JSJ component which is not a hyperbolic 3-manifold whose trace field has no real places. Then $H_{0, 0}(M)$ admits a non-horizontal arc which has a horizontal asymptote at the $x$-axis, and $M$ is half-orderable near 0.
\end{conjecture}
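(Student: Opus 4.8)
The plan is to deduce the statement from Corollary \ref{cor:general} by showing that, once the preceding conjecture is granted, every hypothesis of that corollary can be arranged. Let $\rho\colon\pi_1(M\setminus S)\twoheadrightarrow\pi_1(M')\to SL_2(\mathbb{R})$ be a real holonomy of the JSJ component $M'$, which exists precisely because $M'$ is not hyperbolic with no real place, let $\chi$ be its character, and let $C$ be the Zariski closure of the image of $r\colon X(M)\to\mathbb{C}^3$. Theorem \ref{thm:genusone} already identifies $S$ with an ideal point of limiting character $\chi$ on a component where $\mathrm{tr}(\ell)$ is nonconstant, and the longitude hypothesis supplies the nonconstancy of $\mathrm{tr}([m_1,\ell_1])$ near $\chi$. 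Granting the preceding conjecture, $\chi$ is a smooth point of $C$ at which all but finitely many word-traces are local coordinates, so the single remaining hypothesis of Corollary \ref{cor:general} is the existence of a word $w\in\langle m_1,\ell_1\rangle$ that is simultaneously a local coordinate and satisfies $\Delta(\rho(w))=\Delta(\rho(\varphi(w)))$.

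The core of the argument is the production of this $w$, and the first step I would take is to exploit the abelian structure of $\rho$ at the limit. Since $M'$ is obtained from $M\setminus S$ by attaching a $2$-handle along $c=[m_1,\ell_1]$, the class $c$ is null-homotopic in $M'$, so $\rho(m_1)$ and $\rho(\ell_1)$ commute and $\rho|_{\langle m_1,\ell_1\rangle}$ factors through $\mathbb{Z}^2$; the same holds for $\langle m_2,\ell_2\rangle$. Consequently $\rho(w)$ depends only on the abelianization $(p,q)$ of $w$, and equals $\rho(m_1)^p\rho(\ell_1)^q$. I would classify this abelian image into elliptic, parabolic, and hyperbolic types: in the first two every word maps to an elliptic or parabolic matrix, so Lemma \ref{lma:offdiag} applies and $\Delta$ is a faithful barometer for the $SL_2(\mathbb{R})$ conjugacy coset, whereas in the hyperbolic type every word is hyperbolic, $\Delta$ carries no conjugacy information, and the method breaks down. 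Writing the commuting generators in a common normal form, $\Delta(\rho(w))$ is controlled by a linear form $p\alpha_1+q\beta_1$ in the shear or rotation-angle parameters of the first collar and $\Delta(\rho(\varphi(w)))$ by $p\alpha_2+q\beta_2$ for the second. The sign-matching condition then holds on a full open cone of exponent pairs $(p,q)$ unless these two forms are negatively proportional, i.e. unless the collar data are exactly \emph{anti-aligned} — the signature of $\rho|_{\langle m_1,\ell_1\rangle}$ and $\rho\circ\varphi|_{\langle m_1,\ell_1\rangle}$ differing by an orientation-reversing conjugacy. Intersecting this good cone with the cofinite set of local-coordinate words from the preceding conjecture (each abelianization class still contains infinitely many local coordinates) yields $w$, and Theorem \ref{thm:arc} then produces the $SL_2(\mathbb{R})$-arc limiting to the ideal point.

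The main obstacle, and the part I expect to be genuinely hard, is excluding the two bad cases uniformly: the hyperbolic-abelian type and the anti-aligned configuration. Both are, morally, orientation statements. The gluing $\varphi$ is induced by the product structure of a bicollar $S\times[-1,1]$, which is orientation-preserving in the oriented manifold $M$, so one expects the conjugacy matching the two collars to be realizable in $SL_2(\mathbb{R})$ rather than only in the orientation-reversing coset, forcing positive alignment on at least one coordinate and hence a nonempty good cone. I would try to make this precise by tracking a path in $M'$ carrying $P^+$ to $P^-$ through the $2$-handle region and showing its induced conjugator lies in the identity component, together with a separate argument — using the parabolic boundary behavior of the capped surface $\hat S$ inside the Seifert-fibered or hyperbolic piece $M'$ — that the boundary curve of $P^+$ cannot map to a hyperbolic element, ruling out the hyperbolic-abelian type. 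Establishing these orientation- and boundary-compatibility facts across both the Seifert-fibered and hyperbolic-with-real-place cases is the crux; once they are in hand, Theorem \ref{thm:arc} and the holonomy-extension-locus argument of Theorem \ref{thm:main} apply verbatim to give a non-horizontal arc with a horizontal asymptote at the $x$-axis, and hence half-orderability near $0$.
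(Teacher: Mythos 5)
The statement you tackled is one of the paper's \emph{open conjectures}: the paper gives no proof of it, only the remark that the preceding smoothness conjecture, if proven, ``would lead to'' this one via Corollary \ref{cor:general}. So your attempt must stand on its own. To your credit, you correctly identify the intended skeleton (reduce to Corollary \ref{cor:general}, let the preceding conjecture supply smoothness and local coordinates, then manufacture a word $w$ with the $\Delta$-sign property), and your structural observation is genuinely new and correct as far as it goes: since the $2$-handle kills $c=[m_1,\ell_1]$, the real holonomy $\rho$ restricts to abelian representations on both collar subgroups, so $\Delta(\rho(m_1^p\ell_1^q))$ and $\Delta(\rho(m_2^p\ell_2^q))$ are governed by two linear forms in $(p,q)$, and sign-matching can only fail under exact anti-alignment of these forms or in the hyperbolic-abelian type.

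However, there are genuine gaps. First, the reduction is doubly conditional and its accounting has a hole. Beyond assuming the preceding (open) conjecture, you need the limiting character at the detecting ideal point to be the real holonomy character $\chi$ of $M'$ --- equivalently, you need $\chi$ to lie in the curve $C$ at all, so that Theorem \ref{thm:genusone} applies and so that ``smooth point of $C$'' and ``longitude trace nonconstant near $\chi$'' refer to this particular $\chi$. The conjecture only hypothesizes detection by \emph{some} ideal point, whose limiting character need not be real, need not kill the longitude, and need not be the holonomy of $M'$; identifying it is essentially the paper's own open Question on limiting characters, and your first paragraph assumes it silently when you say Theorem \ref{thm:genusone} ``already identifies'' the ideal point.

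Second, excluding the bad cases is the real mathematical content --- you concede this --- and the mechanism you propose for the anti-aligned case is contradicted by the paper's own example. In Lemma \ref{lma:limiting}, $\rho_n(m_1)$ and $\rho_n(m_2)$ have equal $\Delta$-signs while $\rho_n(\ell_1)$ and $\rho_n(\ell_2)$ have opposite $\Delta$-signs; consequently the two collar representations are not conjugate by \emph{any} element of $GL_2(\mathbb{R})$, of either determinant sign, so there is no ``collar-to-collar conjugator'' whose connected component one could track through the bicollar. Mixed sign patterns are the typical situation; your cone formalism tolerates them, but nothing you propose shows the good cone is nonempty in general, and that is exactly the gap that makes this statement a conjecture rather than a corollary --- the paper sidesteps it by verifying the sign condition through explicit computation, part (1) of Lemma \ref{lma:limiting}. (Your other bad case is the easier one: when $M'$ is hyperbolic with a real place, peripheral traces of a real Galois conjugate of the holonomy are Galois conjugates of $\pm 2$, hence equal to $\pm 2$, so the collar images are parabolic or central and the hyperbolic-abelian type cannot occur; for Seifert-fibered $M'$ it is a matter of fixing the cusped rather than funneled holonomy of the base orbifold.)
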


In order to further generalize these results for genus greater than one, we need to understand limiting characters of these detected Seifert surfaces. To begin, we pose the following question.

\begin{question}
What is the limiting character at an ideal point detecting a Seifert surface of genus greater than one? In particular, when lifting to $\widetilde{PSL_2(\mathbb{R})}$, is the translation number at the longitude zero?
\end{question}

Of particular interest are the $7_3$ and $8_6$ knots, whose holonomy extension loci are already known to contain the asymptote-0 arcs which motivated this paper. 

\subsection{Expanding the orderable interval}

Theorem \ref{thm:main} establishes that certain knot complements are half-orderable near 0. One would hope to obtain more precise information about the arc in the holonomy extension locus; in particular:

\begin{question}
How far does the interval of orderable slopes go? Does this interval go positive or negative?
\end{question} 

We have the following conjecture, inspired by Remark 16.23 of \cite{dunfieldrasmussen}.

\begin{conjecture}
For any Montesinos knot complement $M$, any boundary-parabolic representation into $SL_2(\mathbb{R})$ cannot have longitudinal height zero. In other words, if $\ell$ is the homological longitude of $M$, lifting a boundary-parabolic $\rho: \pi_1(M) \to SL_2(\mathbb{R})$ to $\widetilde{\rho}: \pi_1(M) \to \widetilde{PSL_2(\mathbb{R})}$ results in $\text{trans}(\widetilde{rho}(\ell)) \neq 0$. 
\end{conjecture}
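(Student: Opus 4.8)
The plan is to prove that the longitudinal height $t(\ell) := \text{trans}(\widetilde{\rho}(\ell))$ is a nonzero integer for every boundary-parabolic $\rho$. The first step is to record that $t(\ell)$ is a genuine invariant of $\rho$, not just of a chosen lift: since $\ell$ is null-homologous and $H^1(M;\mathbb{Z})$ is generated by the meridian, any two lifts $\widetilde{\rho}$ differ by a central homomorphism $\pi_1(M)\to\mathbb{Z}$ that vanishes on $\ell$, exactly as in the proof of Theorem \ref{thm:main} where the meridian is normalized to translation number $0$. Moreover, because $\rho$ is boundary-parabolic, $\widetilde{\rho}(\ell)$ covers a parabolic element of $PSL_2(\mathbb{R})$, which has rotation number $0$ on the circle, so $t(\ell)\in\mathbb{Z}$. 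Thus the conjecture becomes the assertion that this integer --- which is precisely the relative Euler number of $\rho$ taken with respect to the longitudinal framing --- never vanishes, and in particular that $\rho(\ell)$ is never $\pm I$.

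The heart of the argument would be to compute this relative Euler number through the rational-tangle decomposition of the Montesinos exterior. Writing $K=K(e;\beta_1/\alpha_1,\dots,\beta_k/\alpha_k)$, the exterior $M$ is cut along the Conway spheres separating consecutive tangles into pieces $M_1,\dots,M_k$, each the exterior (a four-punctured sphere) of a rational tangle. I would establish an additivity formula, of the kind satisfied by bounded/relative Euler classes under a graph-of-groups decomposition, expressing $t(\ell)$ as a sum of a contribution from each tangle piece together with correction terms supported on the four-punctured gluing spheres. The per-tangle contribution should be controlled by the slope $\beta_i/\alpha_i$ in the same way that the Seifert invariants control the orbifold Euler number of the Seifert-fibered double branched cover $\Sigma_2(K)$. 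That Euler number, $e+\sum_i\beta_i/\alpha_i$, is nonzero because $\Sigma_2(K)$ is a rational homology sphere; the goal is to transport this nonvanishing to the longitudinal height, working directly on the exterior so as to sidestep the fact that a parabolic meridian does not descend through the two-fold branched cover.

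To pin down signs at the gluings I would use Lemma \ref{lma:offdiag} together with the elliptic/parabolic trichotomy at the four-punctured spheres: the off-diagonal sign invariant $\Delta$ determines how peripheral parabolics on adjacent tangles are identified in $SL_2(\mathbb{R})$ versus $GL_2(\mathbb{R})$, and hence fixes which sign each tangle contributes. Combined with a Milnor--Wood inequality for each piece --- in the spirit of Eisenbud--Hirsch--Neumann and the orderability theory for Seifert fibered pieces, and of the Milnor--Wood bounds invoked from \cite{cullerdunfield} and \cite{gao} --- this should force the contributions to be sign-definite once the $\beta_i/\alpha_i$ are placed in a normal form, mirroring exactly the definiteness that makes $e+\sum_i\beta_i/\alpha_i\neq 0$ for a knot. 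The desired conclusion $t(\ell)\neq 0$ then follows.

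The main obstacle will be this no-cancellation step. Additivity of the relative Euler number under cutting along the Conway spheres and its integrality at parabolic boundary are robust; what is delicate is establishing sign-definiteness for an \emph{arbitrary} boundary-parabolic representation, rather than for the geometric or orbifold representations where the Seifert fibering makes it transparent. Concretely, one must rule out ``mixed-sign'' gluings at the four-punctured spheres in which the correction terms conspire to cancel the bulk tangle contributions (including the degenerate possibility $\rho(\ell)=\pm I$). I expect this to demand a sharp equality-case analysis of the Milnor--Wood inequality across the decomposition --- analogous to the rigidity phenomena used to classify maximal representations --- and it is precisely here that the hypothesis that $K$ is a knot, which prevents the tangle slopes from summing to a value admitting cancellation, must be used decisively.
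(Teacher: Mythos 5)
You should first be aware that the statement you are addressing is stated in the paper as a \emph{conjecture} (motivated by Remark 16.23 of \cite{dunfieldrasmussen}); the paper offers no proof of it, so your proposal must stand on its own as a complete argument, and it does not. The preliminary reductions you make are correct and essentially standard: the longitudinal height is independent of the lift because $\ell$ is null-homologous (so every homomorphism $\pi_1(M)\to\mathbb{Z}$ kills it), and it is an integer because $\rho(\ell)$, commuting with the parabolic image of the meridian, is itself parabolic or $\pm I$, hence has rotation number zero in $PSL_2(\mathbb{R})$. But these reductions only restate the conjecture as ``a certain integer is nonzero.'' Everything after that is a research plan rather than a proof, and you say so yourself: the additivity formula for the relative Euler number over the Conway-sphere decomposition is asserted to be ``of the kind satisfied by'' such classes rather than derived; the claim that each tangle contributes a term controlled by $\beta_i/\alpha_i$ is an analogy with the Seifert invariants of the double branched cover $\Sigma_2(K)$, an analogy you concede breaks down precisely here because a parabolic meridian has infinite order and the representation does not descend to $\Sigma_2(K)$; and the decisive no-cancellation step is flagged as ``the main obstacle,'' to be handled by an unspecified ``sharp equality-case analysis'' of Milnor--Wood.

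That gap is not a technicality: the no-cancellation step \emph{is} the conjecture. Milnor--Wood bounds the contribution of each piece but is sign-agnostic, and the rigidity or equality-case arguments you invoke (maximal representations, Seifert-fibered orderability theory) apply to representations saturating the bound, whereas an arbitrary boundary-parabolic $SL_2(\mathbb{R})$ representation of a Montesinos exterior need not be geometric, Fuchsian, or maximal on any piece --- indeed the representations relevant to this paper arise as deformations along arcs, far from any rigid model. Nothing in your outline rules out a representation whose restrictions to the tangle pieces carry Euler contributions of mixed sign summing to zero, nor the degenerate case $\rho(\ell)=\pm I$ with a nontrivial central lift. Moreover, Lemma \ref{lma:offdiag}, which you propose to use to fix the gluing signs, only distinguishes $SL_2(\mathbb{R})$- from $GL_2(\mathbb{R})$-conjugacy for \emph{elliptic or parabolic} elements; the paper's own remark following that lemma notes that it fails for hyperbolic elements, and restrictions of $\rho$ to the four-punctured gluing spheres will generically contain hyperbolic elements, so the sign bookkeeping you describe is not available in the form you need. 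As written, your proposal establishes only lift-independence and integrality of the longitudinal height; the nonvanishing, which is the entire content of the conjecture, remains open.
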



This conjecture serves to rule out the possibility that an arc would ``escape" the holonomy extension locus and enter the translation extension locus. We also have the following lemma, which comes from the results of \cite{Dunfield1999CyclicSD}:

\begin{lemma}
On the canonical component, arcs of $SL_2(\mathbb{R})$ representations can only end at ideal points or elliptic representations.
\end{lemma}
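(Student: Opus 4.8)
The plan is to analyze the real locus of the canonical component as a real-analytic one-manifold and to classify what can occur at the endpoints of a maximal arc of $SL_2(\mathbb{R})$ characters. Throughout I would read an \emph{elliptic representation} as one whose restriction to the peripheral subgroup is elliptic, so that the content of the lemma is exactly that a boundary-hyperbolic arc living in $H_{0,0}(M)$ cannot silently transition into the translation extension locus except through the expected boundary-parabolic wall. The strategy is to track two locally constant/continuous invariants along the arc and show that the arc can only leave the boundary-hyperbolic $SL_2(\mathbb{R})$ regime at an ideal point or at an elliptic character.

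First I would set up the local structure. By the results of \cite{Dunfield1999CyclicSD} the canonical component is a curve whose smooth projective model is understood, and away from finitely many singular points and reducible characters its real points form a real-analytic one-manifold. Along this one-manifold I would record two pieces of data: whether the character is that of an irreducible $SL_2(\mathbb{R})$ or $SU(2)$ representation, and the sign of $|\mathrm{tr}(\gamma)| - 2$ for a peripheral element $\gamma$. The first is governed by the standard dichotomy for irreducible representations with real character: writing $\bar\rho = A\rho A^{-1}$ and applying Schur's lemma gives $\bar A A = \pm I$, and this sign is a locally constant invariant separating the $SL_2(\mathbb{R})$ locus from the $SU(2)$ locus, in the spirit of Theorem 4.3 of \cite{goldman}. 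The second invariant records whether the representation is boundary-hyperbolic, and is precisely the coordinate used to place the arc inside $H_{0,0}(M)$ following \cite{gao}.

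Next I would follow a maximal arc of boundary-hyperbolic $SL_2(\mathbb{R})$ characters $\chi_t$ to an endpoint. Either some trace diverges, in which case we are leaving every affine chart and approaching an ideal point (as in the proof of Theorem 2.2.1 of \cite{cullershalen}), or the arc limits to a finite real character $\chi_\ast$. In the finite case I would argue that $\chi_\ast$ cannot be an irreducible $SL_2(\mathbb{R})$ character with strictly hyperbolic boundary: by local constancy of the Schur sign and continuity of the peripheral trace, such a $\chi_\ast$ would have a neighborhood on the real locus consisting of boundary-hyperbolic $SL_2(\mathbb{R})$ characters, so it would be interior to the arc, contradicting maximality. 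Hence at $\chi_\ast$ either a peripheral trace has reached $\pm 2$, so the boundary is parabolic and adjacent representations are elliptic, or the Schur sign flips and the arc meets the $SU(2)$ locus. In the latter case, since every element of an $SU(2)$ representation has trace in $[-2,2]$ and this is a closed condition, the peripheral traces at $\chi_\ast$ lie in $[-2,2]$, so $\chi_\ast$ is again an elliptic representation. Either way the finite endpoint is elliptic, as claimed.

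The main obstacle is the remaining finite endpoints at which $\chi_\ast$ is reducible or a singular point of the curve, since a priori a reducible character on the canonical component could retain a hyperbolic meridian, its meridian eigenvalue being a real root of the Alexander polynomial. Ruling this out is exactly where the structural input of \cite{Dunfield1999CyclicSD} enters: I would use the description of the reducible characters lying on the canonical component, together with the nonconstancy and controlled degree of the peripheral trace map there, to show that any transition of this kind forces the peripheral eigenvalues onto the unit circle, so that the boundary is elliptic at $\chi_\ast$. Establishing this last point carefully, and simultaneously checking that an arc cannot terminate at one of the finitely many singular points of the curve while remaining boundary-hyperbolic, is the technical heart of the argument and the step I expect to require the most care.
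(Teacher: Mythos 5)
A preliminary remark: the paper never actually proves this lemma---it is asserted with a bare citation to \cite{Dunfield1999CyclicSD} and no argument---so your proposal can only be measured against what a correct proof would require, not against a written one. Your overall frame (follow a maximal arc; divergent traces give an ideal point as in Theorem 2.2.1 of \cite{cullershalen}; analyze finite endpoints via the Goldman dichotomy and the peripheral trace) is the natural one. One part of it can be made cleaner and stronger: an irreducible real character can never be simultaneously an $SL_2(\mathbb{R})$ character and an $SU(2)$ character, since a representation conjugate into both would have image a subgroup of $SL_2(\mathbb{R})$ with compact closure, hence conjugate into $SO(2)$ and therefore abelian, contradicting irreducibility; moreover the set of $SU(2)$ characters is compact, hence closed. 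Together these facts show that an $SL_2(\mathbb{R})$-to-$SU(2)$ transition can only occur at a \emph{reducible} character, so the ``Schur sign flips at an irreducible $\chi_\ast$'' branch of your case analysis never happens at all, and the entire difficulty concentrates in the reducible case.

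That reducible case is where the proposal has a genuine gap: it is both left open by your own admission and, more seriously, aimed at a false statement. You propose to show that at a reducible character of the canonical component ``any transition of this kind forces the peripheral eigenvalues onto the unit circle,'' i.e., to rule out reducible limits with hyperbolic boundary. This is contradicted by the figure-eight knot, which appears in this very paper: its Alexander polynomial $t^2-3t+1$ has simple positive real roots, the canonical component contains the reducible characters whose meridian eigenvalue squared is such a root (so the meridian is hyperbolic there), and arcs of irreducible $SL_2(\mathbb{R})$ representations do limit to these characters---this is exactly how the arcs of $H_{0,0}$ of the figure-eight knot meet the $x$-axis in Section 4 of \cite{gao}, and it is the mechanism behind the Alexander-polynomial criterion of \cite{gao} mentioned in Section \ref{sec:examples}. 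So no argument can rule this configuration out. What saves the lemma is the opposite of what you plan to prove: such a point is not an endpoint. At a simple real root of the Alexander polynomial, the reducible character is a smooth point of the unique component containing irreducible characters (Heusener--Porti--Su\'arez), so the real locus there is an analytic arc passing through the reducible character; since the meridian trace stays of absolute value greater than $2$ on both sides, no nearby real character can be an $SU(2)$ character, hence both sides consist of $SL_2(\mathbb{R})$ characters and the arc continues \emph{through} the point rather than ending at it. Without this continuation argument (or some substitute), and with the plan you do sketch unable to succeed, the proposal does not establish the lemma.
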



The above lemma and conjecture would show that in the situation of Corollary \ref{cor:general}, if the ideal point detecting the Seifert surface lies on the canonical component, $M$ is orderable on the interval $[0, b_1]$ or $[b_2, 0]$, where $b_1$ is the minimal positive boundary slope and $b_2$ is a maximal negative boundary slope. For example, the experimental evidence gathered for this paper suggests that the genus one Seifert surfaces of the $(-3, 3, 2n+1)$ pretzel knots are detected by an ideal point on the canonical component. It remains to see if such knots admit boundary-parabolic representations which lift the longitude to an element in $\widetilde{PSL_2(\mathbb{R})}$ with translation number 0.

\subsection{Other slopes}

The techniques in this paper become more subtle when considering incompressible surfaces whose boundary slopes are not 0. For example, for the figure-eight knot, the first example in Section 4 of \cite{gao} shows that there is an arc on the holonomy extension locus resulting in an interval of orderable slopes between -4 and 4, indicating that there is an arc of $SL_2(\mathbb{R})$-representations coming from both ideal points on its canonical component. (These ideal points detect twice-punctured tori with slope $\pm 4$.)

\medskip

However, consider the $(-2, 3, 7)$ pretzel knot. In \cite{gordonluecke}, it was shown that there is an incompressible twice-punctured torus with slope 37/2, and that 37/2-Dehn filling admits one JSJ torus. The JSJ complementary regions are fibered over the disk with two cone points. Thus, there may be an arc of $SL_2(\mathbb{R})$-representations coming from this ideal point, particularly since the issue with HNN-extensions is no longer present. In \cite{niepretzel}, it was shown that 37/2 surgery on the $(-2, 3, 7)$ pretzel knot is not left-orderable. In this case, the only obstruction to left-orderability is the translation number of the lift of the longitude, implying that the real $SL_2(\mathbb{R})$-arc at that ideal point would not lift to a $\widetilde{PSL_2(\mathbb{R})}$-representation whose longitudinal translation number is zero. These two examples lead to the following question.

\begin{question}
Let $M$ be an integral homology solid torus with homological longitude $\ell$. For which boundary slopes detected by ideal points can we establish lifts of $SL_2(\mathbb{R})$ representations lifting to $\widetilde{PSL_2(\mathbb{R})}$-representations $\widetilde{\rho_t}$ such that $\text{trans}(\widetilde{\rho_t}(\ell)) = 0$?
\end{question}

In future work, the author will explore this question for boundary slope $\infty$, particularly the case of essential Conway spheres.

\bibliography{sl2r.bib}

\begin{thebibliography}{10}

\bibitem{bakerpetersen}
Kenneth~L. Baker and Kathleen~L. Petersen.
\newblock Character varieties of once-punctured torus bundles with tunnel
  number one.
\newblock {\em Internat. J. Math.}, 24(6):1350048, 57, 2013.

\bibitem{bgw}
Steven Boyer, Cameron~McA. Gordon, and Liam Watson.
\newblock On {L}-spaces and left-orderable fundamental groups.
\newblock {\em Math. Ann.}, 356(4):1213--1245, 2013.

\bibitem{wiest}
Steven Boyer, Dale Rolfsen, and Bert Wiest.
\newblock Orderable 3-manifold groups.
\newblock {\em Ann. Inst. Fourier (Grenoble)}, 55(1):243--288, 2005.

\bibitem{Chesebro2005NotAB}
Eric~B. Chesebro and Stephan Tillmann.
\newblock Not all boundary slopes are strongly detected by the character
  variety.
\newblock {\em Communications in Analysis and Geometry}, 15:695--723, 2005.

\bibitem{clayrolfsen}
Adam Clay and Dale Rolfsen.
\newblock {\em Ordered groups and topology}, volume 176 of {\em Graduate
  Studies in Mathematics}.
\newblock American Mathematical Society, Providence, RI, 2016.

\bibitem{Cooper1994PlaneCA}
Daryl Cooper, Marc Culler, Henri Gillet, Darren~D. Long, and P.~B. Shalen.
\newblock Plane curves associated to character varieties of 3-manifolds.
\newblock {\em Inventiones mathematicae}, 118:47--84, 1994.

\bibitem{cullerdunfield}
Marc Culler and Nathan~M. Dunfield.
\newblock Orderability and {D}ehn filling.
\newblock {\em Geom. Topol.}, 22(3):1405--1457, 2018.

\bibitem{cullershalen}
Marc Culler and Peter~B. Shalen.
\newblock Varieties of group representations and splittings of {$3$}-manifolds.
\newblock {\em Ann. of Math. (2)}, 117(1):109--146, 1983.

\bibitem{handbook}
R.~J. Daverman and R.~B. Sher, editors.
\newblock {\em Handbook of geometric topology}.
\newblock North-Holland, Amsterdam, 2002.

\bibitem{Dunfield2008AP1}
Nathan~M. Dunfield.
\newblock Examples of non-trivial roots of unity at ideal points of hyperbolic
  3-manifolds.
\newblock {\em Topology}, 38:457--465, 1998.

\bibitem{Dunfield1999CyclicSD}
Nathan~M. Dunfield.
\newblock Cyclic surgery, degrees of maps of character curves, and volume
  rigidity for hyperbolic manifolds.
\newblock {\em Inventiones mathematicae}, 136:623--657, 1999.

\bibitem{dunfieldcensus}
Nathan~M. Dunfield.
\newblock Floer homology, group orderability, and taut foliations of hyperbolic
  3-manifolds.
\newblock {\em Geom. Topol.}, 24(4):2075--2125, 2020.

\bibitem{dunfieldrasmussen}
Nathan~M. Dunfield and Jacob Rasmussen.
\newblock A unified casson-lin invariant for the real forms of sl(2).
\newblock {\em Geometry and Topology}, to appear.

\bibitem{gao}
Xinghua Gao.
\newblock Orderability of homology spheres obtained by {D}ehn filling.
\newblock {\em Math. Res. Lett.}, 29(5):1387--1427, 2022.

\bibitem{gaotwobridge}
Xinghua Gao.
\newblock Slope of orderable {D}ehn filling of two-bridge knots.
\newblock {\em J. Knot Theory Ramifications}, 31(1):Paper No. 2250006, 24,
  2022.

\bibitem{ghys}
\'Etienne Ghys.
\newblock Groups acting on the circle.
\newblock {\em Enseign. Math. (2)}, 47(3-4):329--407, 2001.

\bibitem{goldman}
William~M. Goldman.
\newblock Topological components of spaces of representations.
\newblock {\em Invent. Math.}, 93(3):557--607, 1988.

\bibitem{reducible}
C.~McA. Gordon and J.~Luecke.
\newblock Reducible manifolds and {D}ehn surgery.
\newblock {\em Topology}, 35(2):385--409, 1996.

\bibitem{gordonluecke}
C.~McA. Gordon and John Luecke.
\newblock Non-integral toroidal {D}ehn surgeries.
\newblock {\em Comm. Anal. Geom.}, 12(1-2):417--485, 2004.

\bibitem{hedden}
Matthew Hedden.
\newblock On knot {F}loer homology and cabling. {II}.
\newblock {\em Int. Math. Res. Not. IMRN}, (12):2248--2274, 2009.

\bibitem{hu}
Ying Hu.
\newblock Left-orderability and cyclic branched coverings.
\newblock {\em Algebr. Geom. Topol.}, 15(1):399--413, 2015.

\bibitem{khantran}
Arafat Khan and Anh~T. Tran.
\newblock Classical pretzel knots and left orderability.
\newblock {\em Internat. J. Math.}, 32(11):Paper No. 2150080, 16, 2021.

\bibitem{khanh}
Khanh Le.
\newblock Left orderability for surgeries on the {$[1,1,2,2,2j]$} two-bridge
  knots.
\newblock {\em J. Knot Theory Ramifications}, 31(11):Paper No. 2250069, 22,
  2022.

\bibitem{linnie}
Lin Li and Zipei Nie.
\newblock Non-left-orderability of cyclic branched covers of pretzel knots
  {$P(3,-3,-2k-1)$}.
\newblock {\em Proc. Japan Acad. Ser. A Math. Sci.}, 98(10):91--94, 2022.

\bibitem{liroberts}
Tao Li and Rachel Roberts.
\newblock Taut foliations in knot complements.
\newblock {\em Pacific J. Math.}, 269(1):149--168, 2014.

\bibitem{petersen}
Melissa~L. Macasieb, Kathleen~L. Petersen, and Ronald~M. van Luijk.
\newblock On character varieties of two-bridge knot groups.
\newblock {\em Proc. Lond. Math. Soc. (3)}, 103(3):473--507, 2011.

\bibitem{niepretzel}
Zipei Nie.
\newblock Left-orderablity for surgeries on {$(-2,3,2s+1)$}-pretzel knots.
\newblock {\em Topology Appl.}, 261:1--6, 2019.

\bibitem{paoluzziporti}
Luisa Paoluzzi and Joan Porti.
\newblock Conway spheres as ideal points of the character variety.
\newblock {\em Math. Ann.}, 354(2):707--726, 2012.

\bibitem{sekino}
Nozomu Sekino.
\newblock The jsj-decomposition of the 3-manifold obtained by 0-surgery along a
  classical pretzel knot of genus one.
\newblock {\em preprint}, 2024.

\bibitem{thurston}
William~P. Thurston.
\newblock {\em The geometry and topology of three-manifolds. {V}ol. {IV}}.
\newblock American Mathematical Society, Providence, RI, [2022] \copyright
  2022.
\newblock Edited and with a preface by Steven P. Kerckhoff and a chapter by J.
  W. Milnor.

\bibitem{tillmann}
Stephan Tillmann.
\newblock Character varieties of mutative 3-manifolds.
\newblock {\em Algebr. Geom. Topol.}, 4:133--149, 2004.

\bibitem{traina}
Charles~R. Traina.
\newblock Trace polynomial for two-generator subgroups of {${\rm SL}(2,\,{\bf
  C})$}.
\newblock {\em Proc. Amer. Math. Soc.}, 79(3):369--372, 1980.

\bibitem{tran}
Anh~T. Tran.
\newblock Left orderable surgeries of double twist knots.
\newblock {\em J. Math. Soc. Japan}, 73(3):753--765, 2021.

\bibitem{turner}
Hannah Turner.
\newblock Left-orderability, branched covers and double twist knots.
\newblock {\em Proc. Amer. Math. Soc.}, 149(3):1343--1358, 2021.

\bibitem{twicepunctured}
Yi~Wang.
\newblock Detection of twice-punctured tori in hyperbolic knot complements.
\newblock {\em preprint}.

\bibitem{wang}
Yi~Wang.
\newblock Punctured {JSJ} tori and tautological extensions of {A}zumaya
  algebras.
\newblock {\em to appear in Algebraic and Geometric Topology}, 2023.

\bibitem{zung}
Jonathan Zung.
\newblock Taut foliations, left orders, and pseudo-{A}nosov mapping tori.
\newblock {\em Geom. Topol.}, 28(9):4191--4232, 2024.

\end{thebibliography}
\bibliographystyle{plain}
	
\end{document}